\newcommand{\be}{\begin{eqnarray}}
\newcommand{\ee}{\end{eqnarray}}
\newcommand{\nn}{\nonumber}
\newcommand{\bn}{\begin{eqnarray*}}       
\newcommand{\en}{\end{eqnarray*}}
\newcommand{\bea}{\begin{eqnarray*}}
\newcommand{\eea}{\end{eqnarray*}}
\newcommand{\ben}{\begin{eqnarray}}
\newcommand{\een}{\end{eqnarray}}
\newcommand{\beq}{\begin{equation}}
\newcommand{\eeq}{\end{equation}}
\newtheorem{definition}{Definition}
\newtheorem{remark}{Remark}
\newtheorem{theorem}{Theorem}
\newtheorem{lemma}{Lemma}
\newtheorem{proposition}{Proposition}
\newtheorem{corollary}{Corollary}
\newcommand{\C}{\ensuremath{\mathbb{C}}}
\newcommand{\Z}{\ensuremath{\mathbb{Z}}}
\newcommand{\R}{\ensuremath{\mathbb{R}}}
\newcommand{\N}{\ensuremath{\mathbb{N}}}
\newcommand{\la}{\langle}
\newcommand{\ra}{\rangle}
\newcommand{\Ne}{\mathcal{N}}
\newcommand{\bk}{\mathbf{k}}
\newcommand{\cl}[1]{\overline{#1}}
\newcommand{\eps}{\varepsilon}
\newcommand{\no}[1]{\left\Vert#1\right\Vert}
\newcommand{\til}[1]{\widetilde{#1}}
\newcommand{\cP}{\mathcal{P}}
\renewcommand{\Im}{\operatorname{Im}}
\renewcommand{\Re}{\operatorname{Re}}
\newcommand{\boldm}{\mathbf{m}}
\newcommand{\bx}{\mathbf{x}}
\newcommand{\bxi}{\boldsymbol{\xi}}
\newcommand{\bet}{\boldsymbol{\eta}}
\newcommand{\Ex}{\mathcal{E}}
\newcommand{\res}{\operatorname{res}}
\newcommand{\Lines}{\mathfrak{G}}
\newcommand{\Om}{\Omega}
\begin{document}

\title{Absence of bound states for waveguides
in 2D periodic structures}

\begin{abstract}
We study a Helmholtz-type spectral problem in a two-dimensional
medium consisting of a fully periodic background structure and a a
perturbation in form of a line defect. The defect is aligned along
one of the coordinate axes, periodic in that direction (with the
same periodicity as the background) and bounded in the other
direction. This setting models a so-called ``soft-wall'' waveguide
problem. We show that there are no bound states, i.e. the spectrum
of the operator under study contains no point spectrum.
\end{abstract}

\author{Vu Hoang}%
\email{duy.hoang@kit.edu, hoang@math.wisc.edu}
\affiliation{Institute for Analysis, Karlsruhe Institute for Technology (KIT)\\
Kaiserstrasse 89, 76133 Karlsruhe (Germany)/Department of Mathematics, University of Wisconsin, Madison, Wisconsin 53706 (USA)}%
\author{Maria Radosz}%
\email{radosz@math.uni-karlsruhe.de}
\affiliation{Institute for Analysis, Karlsruhe Institute for Technology (KIT),\\
Kaiserstrasse 89, 76133 Karlsruhe (Germany)}%

\date{1/21/2014}

\maketitle





\section{Introduction}

The study of defects in periodic materials such as semiconductors, photonic crystals or metamaterials
is a central issue in modern nanotechnology. Using point defects and line defects in an otherwise
perfect photonic crystal, for example, one can trap light or guide waves around sharp corners. It
is expected that these developments lead to novel optical devices to manipulate electromagnetic
waves, or even to an all-optical computer. For an introduction and a more thorough discussion of these matters,
we refer e.g. to the textbook \cite{Joann}, or to the review article \cite{Busch}.

Likewise, the study of the underlying partial differential equations
of mathematical physics, and in particular the associated
self-adjoint operators is of great mathematical importance. A
traditional problem in this field  is to investigate the spectral
properties of the operators, since the spectrum corresponds to the
physically admissible energies in the system.

There is one remarkable feature of the spectral theory of periodic
operators. While the absence of eigenvalues is easy
acceptable from an intuitive point of view, the rigorous
mathematical proofs in general require sophisticated techniques and
there are still many problems that have been inaccessible for a long
time.

In this paper, we solve one of these challenging problems.
Namely, the absence of eigenvalues of a two-dimensional partially periodic
Helmholtz-operator which models a soft-wall waveguide problem. In the following,
we describe the problem in greater detail.

Consider a spectral problem of Helmholtz-type on $\R^2$ of the form
\be
\label{eq1}
-\frac{1}{\varepsilon(\bx)} \Delta u = \lambda u.
\ee
The spatially variable dielectric function $\eps\in L^\infty(\R^2, \R)$ is
given by
\be\nn
\eps = \eps_0 + \eps_1
\ee
where $\eps_0$ is periodic
with respect to the lattice $\Z^2$ and $\eps_1$ is periodic in $x_2$
direction (with respect to $\Z$). Moreover,
\beq \label{suppeps}
\operatorname{supp} \eps_1 \subset (0,1)\times \R.
\eeq
$\eps$ and $\eps_0$ are bounded from below by a positive constant. The physical
interpretation is as follows: the spectral problem \eqref{eq1}
models the propagation of polarized electromagnetic waves in a
periodic medium, described by $\eps_0$, perturbed by a straight waveguide
(see figure \ref{figWave}).

The unperturbed spectral problem (i.e., $\eps$ replaced by $\eps_0$
in \eqref{eq1}) is periodic with respect to $\Z^2$ and its spectrum
has the well-known \emph{band gap} structure. Note that the
perturbed problem \eqref{eq1} is no longer periodic with respect to
the full lattice $\Z^2$. The perturbation may induce additional
spectrum. If this additional spectrum is present in a spectral gap of 
the unperturbed operator, then it should correspond to guided modes propagating in the direction of the waveguide \cite{Amm, Kuch3, Kuch4}. However, in
order to associate the additional spectrum with truly guided modes,
one has to prove that no eigenvalues of \eqref{eq1} are contained in
the band gaps. Such an eigenvalue corresponds to a localized mode
(bound state) on the whole space. Its existence should be highly
unlikely, according to physical intuition, but again, as mentioned
above, its nonexistence is in general very hard to prove
rigorously. Our main result is that the
spectrum of the self-adjoint operator $-\frac{1}{\eps(\bx)}\Delta$ (the
sense in which this operator is self-adjoint is made precise below)
contains no eigenvalues. From the applied point of view it is most interesting to exclude the 
existence of bound states in a gap, but our technique can easily handle the 
overall nonexistence of eigenvalues in the whole spectrum
of the perturbed operator.

Problems of a related nature have been a subject of intensive study for some decades.
We do not attempt to give a complete bibliography here and only mention a few key
contributions. For the Schr\"odinger operator with a potential periodic in all space directions,
the absolute continuity of the spectrum was proven in the celebrated paper by L.~Thomas
\cite{Thomas}. His results were extended to Schr\"odinger operators with magnetic potentials, by
M.Sh.~Birman and T.~Suslina in \cite{Bir} and by A.~Sobolev \cite{Sob1}. The periodic Maxwell
operator was treated by A.~Morame in \cite{Morame}. In the context of periodicity in all space
dimensions, the main obstacle is to prove the absence of eigenvalues; the absence of singular
continuous spectrum is true for a large class of periodic operators (see \cite{FiloSob}).  

An overview on results and open problems related to absolute continuity for periodic operators
is given in the papers \cite{Kuch2},\cite{Kuch6} and \cite{Sus}. The study of periodic waveguides goes back
to \cite{Der}. The problem of absolute continuity of the spectrum in periodic waveguides
with ``hard walls'' (i.e. where the guided modes are confined by e.g. Dirichlet boundary conditions)
has been considered in \cite{Sob}, \cite{Fried}, \cite{Sus2} and more recently, in
\cite{Kach}.

Although very relevant for modern developments in nanotechnology, for example photonic crystals
and quantum waveguides, there are only few mathematical publications dealing with ``soft wall'' or
``leaky'' waveguides, i.e. where the guided modes are allowed to
penetrate the surrounding medium with an exponential decay (see \cite{Kuch3}, \cite{Kuch4}).
Sufficient conditions for the existence of spectrum of \eqref{eq1} in 
spectral gaps of the periodic background
have been derived by H.~Ammari and F.~Santosa in \cite{Amm}, P.~Kuchment and B.~Ong in \cite{Kuch4}, \cite{Kuch5}
and also in \cite{BrownHoangPlumWood}.
The papers \cite{Filo, Filo2} by N.~Filonov and F.~Klopp treat a different type of ``soft-wall'' waveguide problem,
namely a periodic waveguide surrounded by a medium which is asymptotically homogeneous
in lateral direction. Another result \cite{ExnerFrank} by P.~Exner and R.~Frank
concerns the situation of leaky quantum waveguides. Here the surrounding medium has constant
material coefficients.

We would like to emphasize that in contrast to the situation in \cite{Filo, Filo2, ExnerFrank},
our waveguide is \emph{not} embedded into a background medium which has
constant material coefficients and neither a medium which
is asymptotically constant infinitely far from the defect in lateral direction.

To the authors' knowledge, the present paper contributes the first result on nonexistence of bound states in
periodic waveguides which are embedded into a background structure which is fully periodic with respect to
all space dimensions.
The problem is highly nontrivial, since the standard Thomas approach is not applicable
(see e.g.~the discussion in P.~Kuchment's review article \cite{Kuch5}). The main difficulty
comes precisely from the ``soft wall'' property of the problem (resulting
in a lack of compactness) and several new and rather sophisticated techniques are needed. We refer to section \ref{sec_plan} for an overview.

It would be interesting and desirable to prove the absolute continuity of the spectrum for the partially
periodic situation considered here. The result in \cite{FiloSob} 
on the absence of singular continous spectrum
unfortunately cannot be applied (see remark \ref{remOnSing}).

Finally, we would like to remark that our technique is applicable almost without changes to Schr\"odinger operators $-\Delta+V_0+V_1$ in two dimensions, where 
the potentials $V_0$ and $V_1$ have the properties of $\eps_0$ and $\eps_1$ respectively.

\begin{figure}[htbp]
\begin{center}
\caption{Illustration of the periodic waveguide. \label{figWave}}
\begin{picture}(0,0)
\includegraphics[scale=0.4]{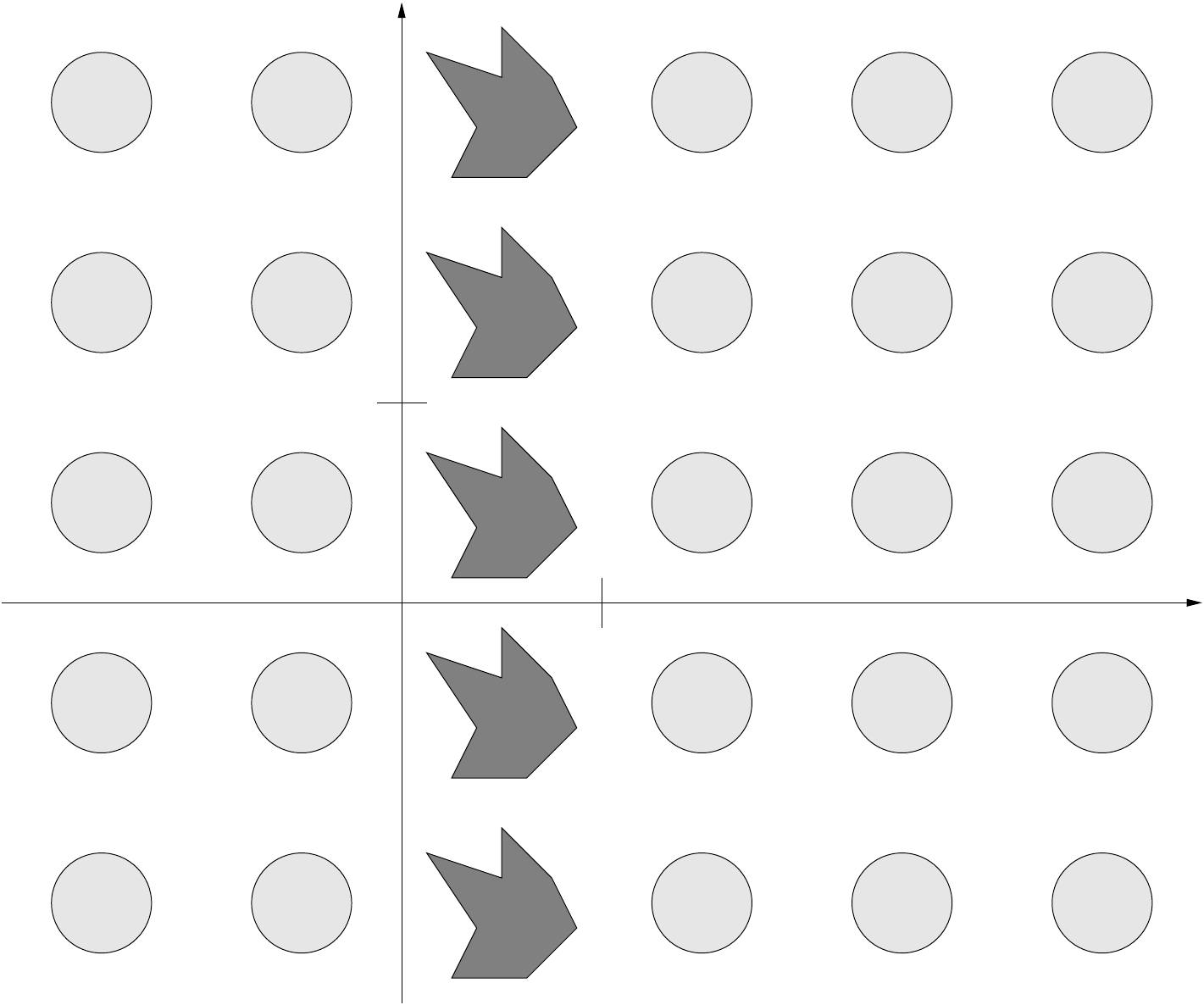}%
\end{picture}%
\setlength{\unitlength}{4144sp}%
\begingroup\makeatletter\ifx\SetFigFont\undefined%
\gdef\SetFigFont#1#2#3#4#5{%
  \reset@font\fontsize{#1}{#2pt}%
  \fontfamily{#3}\fontseries{#4}\fontshape{#5}%
  \selectfont}%
\fi\endgroup%
\begin{picture}(4000,4000)(0,0)
\put(4300,1500){\text{$x_1$}}
\put(1200,3550){\text{$x_2$}}
\put(1280,1250){\text{$0$}}
\put(2130,1170){\text{$1$}}
\put(1250,2090){\text{$1$}}
\end{picture}%
\end{center}
\end{figure}

\section{Notation and preliminaries.}

\subsection{Floquet-Bloch transformations.}
We introduce some notation that will be used below. Let
$S := \R\times (0,1)$ be the strip and $\Omega = (0,1)^2$ the unit
cell. Bold letters will indicate vectors in $\R^2$ or $\Z^2$, for example
$\bk=(k_1,k_2)$, $\boldm=(m_1,m_2)$, $\bet=(\eta_1,\eta_2)$. All
operator norms will be denoted by $\no{\cdot}$, since it will be
clear from the context on which spaces the operator acts in each
case.

Let $\eps_0, \eps_1 \in L^\infty(\R^2, \R)$. $\eps_0$ is assumed to be
periodic with respect to $\Z^2$, whereas for $\eps_1$ we assume
$$
\eps_1(x_1, x_2+m) = \eps_1(x_1, x_2) \quad (m\in \Z)
$$
and $\operatorname{supp} \eps_1\subset (0,1)\times \R.$ Both
$\eps_0$ and $\eps:=\eps_0+\eps_1$ shall be bounded from below by positive
constants, and moreover we assume that there exists a nonempty open set $\mathcal{M}$
with
$$
\operatorname{essinf}_\mathcal{M} |\eps_1| > 0.
$$
The assumption that $\operatorname{supp} \eps_1$ is contained in $(0,1)\times \R$ is made only for
convenience; more generally, we only need to assume that $\operatorname{supp} \eps_1$ is bounded
in $x_1$-direction.

$H_\text{per}^k(S)$ denotes the Sobolev space of functions periodic
in $x_2$-direction, and $H_\text{per}^k(\Omega)$ denotes the Sobolev space
of periodic functions on the unit cell.

The operator
$-\frac{1}{\eps(\bx)} \Delta$ is defined via the quadratic form
$$
b[u, v] = \int_{\R^2} \nabla u \overline{\nabla v}~d\bx, \quad D(b) = H^1(\R^2)\subset L^2(\R^2, \eps)
$$
in the \emph{weighted space} $L^2(\R^2, \eps)$ with inner product
$\la u, v\ra_\eps = \int_{\R^2} \eps(\bx) u \overline{v}~d\bx$. By well known standard arguments
(see \cite{Kato}) there exists
a self-adjoint realization of $-\frac{1}{\eps(\bx)} \Delta$ in $L^2(\R^2,\eps)$.
A simple regularity argument shows that the domain of this self-adjoint operator
is $H^2(\R^2)$. In the following we use
``$-\frac{1}{\eps(\bx)} \Delta$'' to denote the self-adjoint operator defined in the above way.

Note, that $-\frac{1}{\eps(\bx)} \Delta$ is self-adjoint in $L^2(\R^2, \eps)$ and that
$-\frac{1}{\eps_0(\bx)}\Delta$ is self-adjoint in $L^2(\R^2, \eps_0)$.

We will need ``shifted" Laplacian operators on $S$ and on $\Omega$.
For $k_2\in \C$, $-\Delta_{k_2}$ will denote the operator
\be\nn -\Delta_{k_2} := - (\nabla + i (0,k_2))\cdot (\nabla + i  (0,k_2))
\ee acting on functions in $H^2_{\text{per}}(S)$. For $\bk\in
\C^2$, $-\Delta_\bk$ denotes \be\nn -\Delta_\bk := - (\nabla + i
\bk)\cdot (\nabla + i \bk) \ee with $H^2_{\text{per}}(\Omega)$ as
domain.

\begin{remark}
An alternative approach, which works equally well after obvious alterations,
is to consider operators of the type $-\frac{1}{\sqrt{\eps(\bx)}}\Delta \frac{1}{\sqrt{\eps(\bx)}}$,
which would then be self-adjoint in the usual, unweighted $L^2$-inner product.
\end{remark}

The Floquet-Bloch transform in $x_2$ direction (see \cite{Kuch1}, \cite{Kuch2} for a general treatment of
Floquet-Bloch transforms)
\be\nn
(V_{x_2} f)(x_1, x_2, k_2) := \frac{1}{\sqrt{2\pi}} \sum_{n\in \Z} e^{i k_2 (n-x_2)} f(x_1, x_2-n)
\ee
maps $L^2(\R^2, \eps)$ isometrically onto $L^2((-\pi,\pi), L^2(S, \eps))$. We regard
$V_{x_2} f$ as a function mapping $k_2\in (-\pi, \pi)$ to $V_{x_2} f(\cdot,\cdot, k_2)\in L^2(S, \eps)$.
Note here that $\eps$ is periodic in $x_2$-direction and that
$V_{x_2} f(\cdot,\cdot, k_2+2\pi m)=V_{x_2} f(\cdot,\cdot, k_2)$ for $m\in\Z$. The inverse of $V_{x_2}$ is given by
$$
(V_{x_2}^{-1})g(\bx)=\frac{1}{\sqrt{2\pi}}\int_{-\pi}^\pi e^{ik_2x_2}g(\bx,k_2)~dk_2
$$
where $g(\bx,k_2)$ is extended periodically in $x_2$-direction.

It is well-known that $V_{x_2}$ can be
used to reduce the spectral problem \eqref{eq1} to the strip $S$; namely, $-\frac{1}{\eps(\bx)}\Delta$
can be expressed as a direct integral of operators
\beq\label{directS}
-\frac{1}{\eps(\bx)}\Delta = \int_{[-\pi,\pi)}^{\bigoplus} -\frac{1}{\eps(\bx)}\Delta_{k_2} ~dk_2
\eeq
and as a consequence, the spectrum of the self-adjoint operator $-\frac{1}{\eps(\bx)}\Delta$ is
decomposed into the union of the spectra of problems on the strip:
$$
\sigma\left(-\frac{1}{\eps(\bx)}\Delta\right) = \cl{\bigcup_{k_2\in [-\pi, \pi]}
\sigma\left(-\frac{1}{\eps(\bx)}\Delta_{k_2}\right)}.
$$

Using the full periodicity, on the other hand, the spectrum of the periodic operator $-\frac{1}{\eps_0} \Delta$
is decomposed according to
$$
\sigma\left(-\frac{1}{\eps_0(\bx)}\Delta\right) = \bigcup_{\bk\in [-\pi, \pi]^2}
\sigma\left(-\frac{1}{\eps_0(\bx)}\Delta_{\bk}\right).
$$

We will also use the following
Floquet-Bloch transform in $x_1$-direction on $S$:
\beq\label{Vx1}
(V_{x_1}f)(x_1, x_2, k_1) = \frac{1}{\sqrt{2\pi}} \sum_{n\in \Z} e^{i k_1 (n-x_1)} f(x_1-n, x_2).
\eeq
$V_{x_1} : L^2(S, \eps_0) \to L^2((-\pi, \pi), L^2(\Omega, \eps_0))$ is an isometry, too. Its inverse is
$$
(V_{x_1}^{-1}g)(\bx)=\frac{1}{\sqrt{2\pi}}\int_{-\pi}^\pi e^{ik_1x_1}g(\bx,k_1)~dk_1
$$
where $g(\bx,k_2)$ is extended periodically in $x_1$-direction.
As a useful fact, note that the Floquet transform of a function $f$, which is zero outside $\Omega$
is just
\beq\label{Vcomp}
(V_{x_1} f)(\bx, k_1) = (2\pi)^{-1/2} e^{-i k_1 x_1} f(\bx)
\eeq
and so $k_1 \mapsto (V_{x_1} f)(\cdot, k_1)$ is analytic in $k_1\in \C$ with values in $L^2(\Omega)$.

For $\lambda\in \R$ the inverse operator
of $(-\Delta_{\bk}-\lambda \eps_0)$ will frequently appear below,
and we write
\beq\label{defT}\nn
T(\bk) = T(k_1, k_2) := \frac{1}{2\pi}(-\Delta_{\bk}-\lambda \eps_0)^{-1}
\eeq
whenever $(-\Delta_{\bk}-\lambda \eps_0)^{-1}$ exists. This can be true or not, depending on
$\bk\in\C^2$. Furthermore, we often consider
$T(k_1, k_2)$ as a function of $k_1$ for fixed $k_2$. If for fixed $k_2\in \C$, $T(k_1, k_2)$
exists for some $k_1\in \C$, then $k_1 \mapsto T(k_1, k_2)$ is meromorphic
(see section \ref{sec::mero} for further explanation).

As usual, it will be important to diagonalize $-\Delta_{\bk}$ using
Fourier series on $\Omega$. Any $u\in H^2_{\text{per}}(\Omega)$ can
be expanded into Fourier modes $\{e^{i \boldm \cdot
\mathbf{x}}\}_{\boldm\in 2\pi \Z^2}$. On the level of Fourier
coefficients, the action of $-\Delta_{\bk}$ on $u$ is given by
multiplication with the symbol
$$s(\boldm,\bk)=(\boldm + \bk)^2.$$

\subsection{Meromorphic operator-valued functions.}\label{sec::mero}
As a convenience for the reader, we recall briefly
the concept of a meromorphic family of operators. Consider functions
$z \mapsto R(z)$ mapping complex numbers into the space of bounded linear operators
on a Hilbert space. $R(z)$ is called meromorphic if $R(z)$ is defined and analytic
on an open set $\mathcal{D}\subset \C$ except for a discrete set of points in
$\mathcal{D}$. If $z_0$ is one of these exceptional points, then we assume that
$R(z)$ has a Laurent series expansion
\beq\nn
R(z) = \sum_{n=-N}^\infty T_n (z-z_0)^n
\eeq
($0\leq N < \infty$) converging in the uniform operator topology in some punctured neighborhood
of $z_0$. Any point $z_0$ such that $R(z)$ has a Laurent series expansion with $N\geq 1$
is called a \emph{pole} of $R(z)$.

The reader can find information on the properties of
analytic and meromorphic operator-valued functions, for instance, in the book \cite{Hille} of Hille
and Phillips.

\begin{proposition}\label{propmero}
Let $k_2\in \C$ be fixed and suppose that
$T(k_1, k_2)=\frac{1}{2\pi}(-\Delta_{\bk}-\lambda \eps_0)^{-1}$
exists for one $k_1\in \C$. Then $T(k_1, k_2)$ exists for
all $k_1$ except for a discrete set of points in
the complex plane, these exceptional points being the poles of
$T(\cdot, k_2)$. Moreover,
$k_1 \mapsto T(k_1, k_2)$ is an operator-valued meromorphic function.
\end{proposition}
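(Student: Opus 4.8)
The plan is to reduce the claim to the analytic Fredholm theorem, by writing $-\Delta_\bk - \lambda\eps_0$ as a boundedly invertible reference operator plus an entire family of compact perturbations in the variable $k_1$. Fix $k_2\in\C$ and set $c:=1+|k_2|^2$. I would first introduce the reference operator $B_0 := -\Delta_{(0,k_2)} + c$ on $H^2_{\text{per}}(\Omega)$ (that is, the operator $-\Delta_\bk + c$ evaluated at $k_1=0$). Diagonalizing $B_0$ with the Fourier modes $\{e^{i\boldm\cdot\bx}\}_{\boldm\in 2\pi\Z^2}$, it acts by multiplication with $m_1^2 + (m_2+k_2)^2 + c$, and since $\Re\!\big(m_1^2 + (m_2+k_2)^2 + c\big) = m_1^2 + (m_2+\Re k_2)^2 + (\Re k_2)^2 + 1 \ge 1$ while this symbol grows like $|\boldm|^2$, the operator $B_0$ is a bijection of $H^2_{\text{per}}(\Omega)$ onto $L^2(\Omega)$ with bounded inverse $B_0^{-1}\colon L^2(\Omega)\to H^2_{\text{per}}(\Omega)$.

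Next, using $-\Delta_\bk = -\Delta_{(0,k_2)} - 2ik_1\partial_{x_1} + k_1^2$, I get the decomposition
\[
-\Delta_\bk - \lambda\eps_0 = B_0 + C(k_1), \qquad C(k_1) := -2ik_1\partial_{x_1} + (k_1^2 - c) - \lambda\eps_0,
\]
hence $-\Delta_\bk - \lambda\eps_0 = \big(I + K(k_1)\big)B_0$ with $K(k_1) := C(k_1)B_0^{-1}$. Each of the three terms constituting $C(k_1)$ maps $H^2_{\text{per}}(\Omega)$ into $L^2(\Omega)$ through the compact embedding $H^2_{\text{per}}(\Omega)\hookrightarrow L^2(\Omega)$ (for the first term via $H^2_{\text{per}}(\Omega)\to H^1_{\text{per}}(\Omega)\hookrightarrow L^2(\Omega)$), so $C(k_1)$ is compact from $H^2_{\text{per}}(\Omega)$ to $L^2(\Omega)$; since it is moreover a polynomial in $k_1$ with fixed operator coefficients, $k_1\mapsto K(k_1)$ is an entire family of compact operators on $L^2(\Omega)$. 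As $B_0$ is always boundedly invertible, $T(k_1,k_2) = \tfrac{1}{2\pi}(-\Delta_\bk - \lambda\eps_0)^{-1}$ exists precisely when $I + K(k_1)$ is invertible on $L^2(\Omega)$, and then $T(k_1,k_2) = \tfrac{1}{2\pi}B_0^{-1}(I+K(k_1))^{-1}$.

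Finally I would invoke the analytic Fredholm theorem for $k_1\mapsto I + K(k_1)$ on the connected domain $\C$. The hypothesis that $T(k_1,k_2)$ exists for some $k_1$ means $I + K(k_1)$ is invertible somewhere, which excludes the alternative of nowhere-invertibility; hence $(I+K(k_1))^{-1}$ exists for all $k_1\in\C$ outside a discrete set $\Sigma$ and is meromorphic there with poles of finite order. Left-multiplication by the fixed bounded operator $B_0^{-1}$ preserves meromorphy, so $k_1\mapsto T(k_1,k_2)$ is operator-valued meromorphic; and a point $k_0\in\Sigma$ must be a genuine pole, since if the meromorphic continuation were analytic at $k_0$ with value $R(k_0)$, then letting $k_1\to k_0$ in $R(k_1)(I+K(k_1)) = (I+K(k_1))R(k_1) = I$ would make $I+K(k_0)$ invertible, contradicting $k_0\in\Sigma$. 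Thus the exceptional points are exactly the poles of $T(\cdot,k_2)$.

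I do not expect a serious obstacle here: the argument is essentially the standard analytic-Fredholm reduction. The only points that require care are choosing the shift $c$ so that the reference operator $B_0$ is boundedly invertible even for complex $k_2$, and verifying that the $k_1$-dependent perturbation $C(k_1)$ is genuinely \emph{compact} as a map $H^2_{\text{per}}(\Omega)\to L^2(\Omega)$ (where the Rellich compact embedding enters) rather than merely bounded; once the factorization $-\Delta_\bk - \lambda\eps_0 = (I + K(k_1))B_0$ with $K(\cdot)$ an entire compact-operator-valued family is in place, the conclusion is immediate.
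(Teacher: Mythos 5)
Your proof is correct. Both you and the paper reduce the claim to the analytic Fredholm theorem, but the decompositions are organized differently. The paper keeps the $k_1$-dependence in the reference operator, writing $-\Delta_{\bk}-\lambda\eps_0$ as $(-\Delta_{\bk}+\mu)$ composed with $I$ plus the compact perturbation $(-\Delta_{\bk}+\mu)^{-1}(\text{multiplication})$; this forces it to choose $\mu$ so that the reference operator is uniformly coercive only on a ball $B(k_1^*,R)$, and then to patch the local conclusions together with a covering argument over overlapping balls. You instead freeze the reference operator at $k_1=0$ (with a shift $c$ chosen so that its symbol has real part bounded below even for complex $k_2$) and push all of the $k_1$-dependence into $C(k_1)=-2ik_1\partial_{x_1}+(k_1^2-c)-\lambda\eps_0$, which after composition with $B_0^{-1}$ is an entire quadratic polynomial in $k_1$ with compact operator coefficients. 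This makes the analytic Fredholm alternative applicable on all of $\C$ in a single step and eliminates the covering argument; the price is the (easy, Rellich-based) verification that the first-order term $-2ik_1\partial_{x_1}B_0^{-1}$ is genuinely compact on $L^2(\Omega)$, a point that does not arise in the paper's version where the compact factor is a resolvent times a bounded multiplication. Your closing argument that the exceptional points are genuine (non-removable) poles is also fine, though it is already part of the standard statement of the analytic Fredholm theorem.
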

The proof is given in the appendix.

In what follows, we have to study the set of poles of $T(\cdot, k_2)$ as $k_2$ varies
in the complex plane. In a slightly different context, this question has been extensively
studied by S. Steinberg in \cite{Stei}. In general, the individual poles of $T(\cdot, k_2)$ move
continuously as $k_2$ varies, which follows from an adaption of results by S. Steinberg
(see \cite{Stei}) to our special case. Following the language of Steinberg, we say that
\emph{the poles} of $T(\cdot, k_2)$ are continuous functions of $k_2$.

Moreover, as long as the poles do not collide, they are given by analytic functions of
$k_2$. For certain values of $k_2$, two or several poles of $T(\cdot, k_2)$
may collide and in the neighborhood of these points, the poles are given by algebroidal
functions of $k_2$ (Puiseux series). We will prove all these facts and render them
precise without using Steinberg's results, to the extent they are needed here,
in the appendix.

\begin{remark}
The poles behave analogously to the eigenvalues of a matrix eigenvalue problem
of the form $M(\kappa) u = \lambda(\kappa) u$ with a matrix depending analytically
on $\kappa$ (see \cite{Kato}), which might be more familiar with most of the readers.
\end{remark}

\section{Main result and general plan of the paper.}\label{sec_plan}

\subsection{The main result}
\begin{theorem}\label{mainth}
The self-adjoint operator $-\frac{1}{\eps(\bx)}\Delta$ has no point spectrum.
\end{theorem}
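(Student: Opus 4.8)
The plan is to follow a Thomas-type argument adapted to the partially periodic geometry, reducing everything to the strip $S$ via the Floquet--Bloch transform $V_{x_2}$. Suppose, for contradiction, that $\lambda\in\R$ is an eigenvalue of $-\frac{1}{\eps(\bx)}\Delta$ with eigenfunction $u\in H^2(\R^2)$. By the direct integral decomposition $-\frac{1}{\eps(\bx)}\Delta=\int^{\oplus}_{[-\pi,\pi)}-\frac{1}{\eps(\bx)}\Delta_{k_2}\,dk_2$, the existence of a point eigenvalue forces a \emph{set of positive measure} of real $k_2$ for which $-\frac{1}{\eps(\bx)}\Delta_{k_2}$ has $\lambda$ as an eigenvalue on the strip; equivalently $(-\Delta_{k_2}-\lambda\eps)v=0$ has a nonzero solution $v=v(\cdot,\cdot,k_2)\in H^2_{\text{per}}(S)$ for each such $k_2$. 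First I would write this strip equation as a perturbation of the background: $(-\Delta_{k_2}-\lambda\eps_0)v=\lambda\eps_1 v$, and since $\eps_1$ is supported in $(0,1)\times\R$, the right-hand side is of the form $\widehat{g}$ for some $g\in L^2(\Omega)$ depending on $v$. Applying the resolvent formula \eqref{formres}, the eigenfunction $v$ restricted to $\Omega$ must satisfy the \emph{closed integral equation}
\begin{equation}\nn
v|_\Omega = \lambda\int_{-\pi}^{\pi} e^{ik_1x_1}\,T(k_1,k_2)\,\bigl(e^{-ik_1\cdot}\eps_1 v\bigr)\,dk_1,
\end{equation}
so the problem is reduced to a spectral equation living entirely on the unit cell $\Omega$, governed by the operator family $T(k_1,k_2)=\frac{1}{2\pi}(-\Delta_{\bk}-\lambda\eps_0)^{-1}$.

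The next step is the analytic continuation in the Bloch parameter $k_2$, which is the heart of the Thomas method. I would regard the strip equation as an analytic family in $k_2$ and continue $k_2$ into the complex plane, $k_2=a+ib$ with $|b|$ large. The key point is that for the \emph{unperturbed} operator, $(-\Delta_{k_2}-\lambda\eps_0)$ is invertible for such complex $k_2$ away from a discrete exceptional set: on the Fourier side the symbol is $s(\boldm,\bk)=(\boldm+\bk)^2$, and when $\Im k_2$ is large the imaginary part of the symbol, roughly $2(m_2+k_1)\,\Im k_2$, controls the whole expression except on a thin strip of $\boldm$, which is handled by the remaining elliptic terms; this is exactly where Proposition~\ref{propmero} and the meromorphic structure of $k_1\mapsto T(k_1,k_2)$ enter. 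One then needs: (i) that the integral operator on $\Omega$ defined by the right-hand side above, call it $K(k_2)$, extends to a meromorphic (in fact, after clearing poles, analytic) family of compact operators on $L^2(\Omega)$ for $k_2$ in a suitable complex strip; (ii) that $I-\lambda K(k_2)$ is invertible for $|\Im k_2|$ sufficiently large, so no nonzero $v$ can exist there; and (iii) an analytic Fredholm / unique continuation argument to propagate non-solvability back to the set of positive measure of real $k_2$, yielding the contradiction.

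The main obstacle — and the reason the standard Thomas approach fails here, as flagged in the introduction — is controlling $T(k_1,k_2)$ and the composed integral operator $K(k_2)$ under the double analytic continuation. Because the waveguide is embedded in a medium that is periodic (not constant, not asymptotically constant) in the lateral $x_1$-direction, one cannot simply invert a constant-coefficient operator; instead $T(k_1,k_2)$ itself has a nontrivial pole structure in $k_1$, and as $k_2$ moves into the complex plane these poles move (continuously, and analytically/algebroidally away from collisions, as recorded after Proposition~\ref{propmero}). The delicate work is therefore a careful deformation of the $k_1$-contour in \eqref{formres} past these moving poles — picking up residue contributions that must themselves be estimated — while simultaneously keeping $\Im k_2$ large enough to dominate the symbol. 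Establishing the requisite uniform operator-norm bounds on the deformed-contour integral (so that $\|\lambda K(k_2)\|<1$) is where the ``several new and rather sophisticated techniques'' are needed; once those estimates are in place, the Fredholm-analyticity conclusion and the measure-theoretic contradiction are comparatively routine.
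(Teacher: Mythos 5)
Your overall strategy is the paper's: Floquet--Bloch in $x_2$ to get a positive-measure set of quasimomenta, reduction to a Fredholm equation on the unit cell via the resolvent $T(k_1,k_2)$ of the periodic background, analytic continuation in $k_2$ combined with a $k_1$-contour deformation and residue bookkeeping, symbol estimates for large $\Im k_2$, a Neumann series, and the analytic Fredholm theorem to reach a measure-theoretic contradiction. However, two steps, as you state them, would fail without additional ideas that the paper supplies. First, your ``closed integral equation'' $v|_\Omega=\lambda\int_{-\pi}^{\pi}e^{ik_1x_1}T(k_1,k_2)\bigl(e^{-ik_1\cdot}\eps_1 v\bigr)\,dk_1$ is not well defined as written: formula \eqref{formres} presupposes that $(-\Delta_{k_2}-\lambda\eps_0)^{-1}$ exists on the strip, which is false whenever $\lambda$ lies in a band of the background at quasimomentum $k_2$ (a case the theorem must cover), and correspondingly $T(\cdot,k_2)$ has poles \emph{on the real $k_1$-axis}. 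The paper's resolution is Proposition \ref{propdelta0} (choose $Z_0$ and $\delta_0$ so that, for $k_2\in Z_0\cap\cP$, each pole either stays real or stays at distance greater than $2\delta_0$ from $\R$) together with Proposition \ref{proponw} (the specific integrand built from the eigenfunction has removable singularities at the real poles and extends analytically to the strip $|\Im k_1|<2\delta_0$), after which the cell operator $A(k_2)$ is defined by integrating over the shifted segment $[-\pi,\pi]+i\delta_0$ rather than over $[-\pi,\pi]$.

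Second, you note that the poles of $T(\cdot,k_2)$ move algebroidally and may collide, but you do not say how the residue sum is kept analytic in $k_2$ through a collision. Replacing nearby residues by a joint contour integral does not work in general: if a pole $q_j^+$ originating between $[-\pi,\pi]+i\delta_0$ and $[-\pi,\pi]+i\tau_1$ collides with a pole $q_i^-$ originating below, only the former contributes to the continued operator, and merging them destroys analyticity. The paper's device is to continue $A(k_2)$ only along a path $\Gamma$ with $\Im\Gamma(t)\to+\infty$, chosen inside a simply connected neighborhood $\Ne(\Gamma)$ that avoids the discrete exceptional set $\Ex$ of collision points, so that each $q_j^+$ is a single-valued analytic function and each residue is analytic. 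With those two repairs your outline matches the paper's proof; the remaining ingredients you list (localization of the poles for large $\Im k_2$, the symbol bounds of Theorem \ref{prop1}, and the analytic Fredholm theorem on $\Ne(\Gamma)$) are exactly the ones used.
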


\begin{remark}\label{remOnSing}
As mentioned before, it would be desirable to prove the absolute continuity of the spectrum of $-\frac{1}{\eps(\bx)}\Delta$
by excluding the singular continuous part of the spectrum. This would require, for instance, a result analogous to the one obtained by N. Filonov and A. Sobolev in \cite{FiloSob}; their result cannot be applied directly, since their proof requires the resolvents of the operators in the direct integral
decomposition \eqref{directS} to be compact. This is not the case here, since the operators act on functions defined on the unbounded
strip $S$. We leave the problem open for the moment.  
\end{remark}

We now sketch our approach to the proof of theorem \ref{mainth}.
\begin{enumerate}
\item[\textbf{1.}] The first step is to use the usual Floquet-Bloch reduction in $x_2$-direction.
Applied to \eqref{eq1}, this yields a problem on the strip $S$.
Thus, the existence of a nontrivial solution of \eqref{eq1} implies that
\beq\label{eq2}
(-\Delta_{k_2} - \lambda \eps(\bx)) \widetilde{v} = 0, \quad \widetilde{v}\in H^2_\text{per}(S)
\eeq
has a nontrivial solution for $k_2$ from a set $\cP$ with positive measure in $[-\pi,\pi]$. One cannot apply
Thomas' idea (extension to complex $k_2$) directly to \eqref{eq2}, since $S$ is unbounded
and hence the spectrum of the strip problem is not discrete.

However, using Floquet-Bloch transform with respect to the $x_1$-direction and using \eqref{suppeps} we
can derive a Fredholm problem on the unit cell $\Omega$:
\be\label{eq3}
v - \lambda A(k_2) \varepsilon_1 v = 0
\ee
with some compact operator $A(k_2) : L^2(\Omega)\to L^2(\Omega)$ to be introduced below
in \eqref{eqdefA}. $A(k_2)$ is defined for $k_2$ in a neighborhood of the real axis.

\item[\textbf{2.}] In the second step, we construct an analytic continuation of the operator family $A(k_2)$
to values $k_2$ with large imaginary part. Basically, the idea consists in using the integral representation
\beq\label{eqA} A(k_2)r=\int_{[-\pi,\pi]+i\delta_0} e^{ik_1x_1}T(k_1,k_2)[e^{-ik_1\cdot}r]~dk_1\eeq
and deforming the integral to obtain a new representation involving an integral over a line lying sufficiently
far away from the real axis (in the $k_1$-plane) plus a sum over the residues of the
meromorphic operator-valued function $T(\cdot,k_2)$. Here, the restriction to two space dimensions comes into play. Corresponding 
to the two space dimensions we have two complex quasimomenta $k_1$ and $k_2$. Note that $A(k_2)$ is given by a 
line integral in the complex $k_1$-plane. Thus the usual techniques from complex analysis in one 
variable are available, e.g.~the residue theorem. We will see that the integral and the residues from
the new representation of $A(k_2)$ are analytic in $k_2$, if $k_2$ is from a suitable region in the complex plane.
Since in the following we want to
let $\Im k_2\to \infty$, it is therefore
crucial to understand the movement of the poles of $T(\cdot,k_2)$
as $k_2$ varies. In general, however, the poles have algebraic singularities as functions
of $k_2$. In order to overcome this difficulty, we construct an analytic continuation only
in the neighborhood of a certain path in the complex plane, carefully avoiding the
algebraic branching points.
\item[\textbf{3.}] In the third and technically most difficult step, we study the
behavior of the analytically continued operator-valued family $A(k_2)$ for values $k_2$
with large imaginary part. By carefully estimating the symbol
of the shifted cell Laplacian $-\Delta_{\bk}$, we will be able to localize the poles of
$T(\cdot, k_2)$ for $\Im k_2$ large. The essential technical estimates are contained
in theorem \ref{prop1}, and here the two-dimensionality of the problem is required, too.
Finally, these estimates allow us to conclude by a Neumann
series argument that \eqref{eq3} has only the trivial solution.
\end{enumerate}

\subsection{Plan of the paper}
The remainder of the paper is structured as follows: in section \ref{sec_reform}, we
give the analytic Fredholm equation involving the operator $A(k_2)$. Then, in section
\ref{sec_cont} we describe in detail the analytic continuation process. The study
of the continued operator family for large imaginary values of $k_2$ occupies section
\ref{sec_asymp}. Finally, section \ref{sec_proofmain} contains the proof of the
main result. Since the argument is fairly difficult, we help the reader to keep
track of the main line of argumentation by transferring the technical details into
the appendix.

\section{Reformulation of the problem}\label{sec_reform}
Let $\lambda\in \R$ be fixed and $u\in H^2(\R^2)$ be a fixed nontrivial solution of \eqref{eq1}.
For the rest of the paper we will work with this fixed $u$, until finally in section \ref{sec_proofmain}
we will be led to a contradiction, thus proving the main result.
Since $u\in H^2(\R^2)$ solves
\be\label{eq6}
-\frac{1}{\eps(\bx)}\Delta u-\lambda u=0,
\ee
we deduce using the isometry property
of $V_{x_2}$ that $\til{v}(\cdot,k_2)=(V_{x_2}u)(\cdot,k_2)$
solves
\be\label{eq71}
(-\Delta_{k_2}-\lambda\eps(\bx))\til{v}(\cdot,k_2)=0\quad~\text{on}~~S
\ee
for almost all $k_2\in[-\pi,\pi]$. Since $u\neq 0$, the set
$\widetilde \cP$
of all real $k_2$ such that \eqref{eq71} has a nontrivial solution has positive one-dimensional
Lebesgue measure in $[-\pi, \pi]$.
Notice that the complex conjugate $\cl{\til{v}(\cdot, k_2)}$ solves
$(-\Delta_{-k_2} -  \lambda \eps)\cl{\til{v}(\cdot, k_2)} = 0$. Hence $-\widetilde \cP = \widetilde\cP$ and thus
\be\label{eqP}
\cP:=\widetilde \cP\cap(0,\pi]
\ee has positive measure in $[-\pi,\pi]$. Clearly there exists a
$$0< \theta < \pi$$ such that
$\cP \cap [\theta, \pi]$ has positive one-dimensional Lebesgue measure. We fix $\theta$ for the rest of the paper.

\subsection{The poles of $T(\cdot,k_2)$}
Starting from now, we fix a number
$\delta > 0$ such that
$$0 < \delta < \min\{\frac{\pi}{4}, \pi - \theta\}.$$
In the following, we define two domains $Z$ and $Z_0$ in the complex $k_2$-plane,
whose meaning will become clear later.
$Z$ is the following domain (see Figure \ref{figZZ0}):
\bea\nn Z &:=& \{z \in \C : \Re z \in (\pi-\delta,
\pi+\delta), \Im z \in \R \}\, \cup\, \{ z\in \C : \Re z \in (\theta, \pi+\delta), |\Im z| < \delta \}.
\eea

First we must make sure that $T(\cdot, k_2)$ is meromorphic for all $k_2\in Z$.
\begin{theorem}\label{freeofpoles}
There exists a number $\tau_1\in 2\pi \N$ such that for all $k_2\in Z$, $T(k_1, k_2)$ exists for all
$
k_1 \in [-\pi, \pi] \pm i\tau_1.
$
\end{theorem}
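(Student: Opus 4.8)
The plan is to reduce the existence of $T(k_1,k_2)=\frac{1}{2\pi}(-\Delta_{\bk}-\lambda\eps_0)^{-1}$ to a pointwise lower bound on the symbol of the shifted cell Laplacian, and then to establish that bound directly. On the Fourier side $\{e^{i\boldm\cdot\bx}\}_{\boldm\in2\pi\Z^2}$ the operator $-\Delta_{\bk}$ is multiplication by $s(\boldm,\bk)=(m_1+k_1)^2+(m_2+k_2)^2$, so it is boundedly invertible on $L^2(\Omega)$ precisely when $\inf_{\boldm}|s(\boldm,\bk)|>0$, with $\|(-\Delta_{\bk})^{-1}\|=\bigl(\inf_{\boldm}|s(\boldm,\bk)|\bigr)^{-1}$. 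Writing $-\Delta_{\bk}-\lambda\eps_0=(-\Delta_{\bk})\bigl(I-\lambda(-\Delta_{\bk})^{-1}\eps_0\bigr)$ on $H^2_{\mathrm{per}}(\Omega)$ and using that multiplication by $\eps_0$ has norm at most $\|\eps_0\|_{L^\infty}$ on $L^2(\Omega)$, a Neumann series argument shows that $-\Delta_{\bk}-\lambda\eps_0$ is boundedly invertible, i.e.\ $T(k_1,k_2)$ exists, as soon as
\[
\inf_{\boldm\in2\pi\Z^2}|s(\boldm,\bk)|>|\lambda|\,\|\eps_0\|_{L^\infty}=:C_0 .
\]
Everything thus comes down to proving this symbol estimate, uniformly for $k_2\in Z$ and $k_1\in[-\pi,\pi]\pm i\tau_1$, with $\tau_1\in2\pi\N$ chosen large.

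To this end I would factor the symbol over $\C$. With $w:=m_1+k_1$, $z:=m_2+k_2$,
\[
s(\boldm,\bk)=w^2+z^2=(w+iz)(w-iz),\qquad
|s(\boldm,\bk)|=|w+iz|\,|w-iz|\ge|\Im(w+iz)|\,|\Im(w-iz)| .
\]
Write $k_1=\sigma\pm i\tau_1$ and $k_2=a+ib$; then $\Im(w+iz)=\pm\tau_1+m_2+a$ and $\Im(w-iz)=\pm\tau_1-m_2-a$, and neither of these depends on $\Im k_2=b$, on $\sigma$, or on $m_1$. Two arithmetic facts drive the estimate. (i) Since $\tau_1\in2\pi\N$ and $m_2\in2\pi\Z$ we have $\pm\tau_1+m_2\in2\pi\Z$ and $\pm\tau_1-m_2\in2\pi\Z$, so each of the two imaginary parts has modulus at least $\operatorname{dist}(a,2\pi\Z)$; and from the definition of $Z$ one checks, using $0<\theta<\pi$ and $0<\delta<\min\{\tfrac\pi4,\pi-\theta\}$, that $\Re k_2=a\in(\theta,\pi+\delta)\subset(0,2\pi)$, hence $\operatorname{dist}(a,2\pi\Z)\ge\rho_0:=\min\{\theta,\pi-\delta\}>0$. (ii) $\Im(w+iz)+\Im(w-iz)=\pm2\tau_1$, so the two moduli $p:=|\Im(w+iz)|$ and $q:=|\Im(w-iz)|$ satisfy $p,q\ge\rho_0$ and $p+q\ge2\tau_1$. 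An elementary minimisation of $pq$ under these constraints gives, since $\tau_1\ge2\pi>\rho_0$, the bound $|s(\boldm,\bk)|\ge pq\ge\rho_0(2\tau_1-\rho_0)$ for every $\boldm\in2\pi\Z^2$, uniformly in $k_2\in Z$ and in $\sigma\in\R$.

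It then suffices to fix $\tau_1\in2\pi\N$ so large that $\rho_0(2\tau_1-\rho_0)>C_0$; with this choice the symbol estimate of the first paragraph holds for all $k_2\in Z$ and all $k_1\in[-\pi,\pi]\pm i\tau_1$, so $T(k_1,k_2)$ exists there, which is the claim (and, by Proposition~\ref{propmero}, $T(\cdot,k_2)$ is then meromorphic on $Z$). I do not expect a genuine obstacle here: the one thing that has to be gotten right is the interplay between the two space dimensions and the arithmetic of $\tau_1$ — the symbol splits into two factors linear in $\bk$ whose imaginary parts add up to $\pm2\tau_1$ and hence cannot both be small, while the quantisation $\tau_1\in2\pi\N$ aligns the shifted lattice $\pm\tau_1+m_2$ with $2\pi\Z$ so that $\operatorname{dist}(\Re k_2,2\pi\Z)>0$ keeps both factors away from the origin even though $Z$ is unbounded in the $\Im k_2$ direction.
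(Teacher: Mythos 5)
Your proposal is correct and follows essentially the same route as the paper: the quantity you bound from below, $|\Im(w+iz)|\,|\Im(w-iz)|=|(m_2+\Re k_2)^2-\tau_1^2|$, is exactly the first term of the paper's symbol estimate \eqref{hammer1}, and your constrained minimisation of $pq$ plays the role of the paper's Lemma~\ref{lemmaminex}, after which both arguments conclude by the same Neumann series step. The only cosmetic difference is that you derive the symbol bound by factoring $w^2+z^2$ over $\C$ rather than by citing the appendix estimates.
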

\begin{proof}
From theorem \ref{prop1} (in particular estimate \eqref{hammer1}) in
the appendix we get \be\nn \|-\Delta_{(\xi_1 + i \tau_1,k_2)}^{-1}\|
\leq \left(\min_{m_2\in 2\pi\Z}|(m_2 + \Re k_2)^2-\tau_1^2|\right)^{-1}.
\ee for $\xi_1\in [-\pi, \pi], \tau_1\in 2\pi \N$. If $k_2\in Z$, then $\Re
k_2\in[\theta,\pi+\delta]$, and we clearly have
$$|m_2+\Re k_2 \pm \tau_1|\geq \theta.$$
An elementary argument also shows
$$\min_{m_2\in 2\pi\Z}\{|m_2+\Re k_2 - \tau_1|,|m_2+\Re k_2 + \tau_1|\}\geq \tau_1.$$
Thus
$
|(m_2 + \Re k_2)^2-\tau_1^2|=|m_2 + \Re k_2+\tau_1||m_2 + \Re k_2-\tau_1|\geq \theta\tau_1
$
and
$$
\left(\min_{m_2\in 2\pi\Z}|(m_2 + \Re k_2)^2-\tau_1^2|\right)^{-1} \leq \frac{1}{\theta\tau_1}.
$$
We hence may choose $\tau_1\in 2\pi\N$ so large that for all
$\xi_1 \in [-\pi, \pi]$
\be\nn \|-\Delta_{(\xi_1 + i
\tau_1,k_2)}^{-1}\| \leq \frac{1}{2\lambda
\|\eps_0\|_{\infty}}\qquad(k_2\in Z) \ee holds. The standard Neumann
series argument then shows that $$(-\Delta_{(\xi_1+i \tau_1 ,k_2)}-
\lambda\eps_0)^{-1}$$ exists for $\xi_1\in [-\pi, \pi]$ and $k_2\in Z$.
\end{proof}
As a consequence of this theorem, $k_1 \mapsto T(k_1, k_2)$ is meromorphic in the variable $k_1$
for each $k_2\in Z$ (see proposition \ref{prop1}). The following remark is easy to see:
\begin{remark}
If $k_1$ is a pole of $T(\cdot, k_2)$, then $k_1+2\pi m$ is also a pole of
$T(\cdot, k_2)$ for any $m\in \Z$, i.e. the poles of $T(\cdot, k_2)$ repeat
periodically in real direction with period $2\pi$. Thus, equivalently, we may regard the
poles of $T(\cdot, k_2)$ as elements of $\C/2\pi$.
Moreover, if we define for $r\in L^2(\Omega)$
\be\label{defH}
H(k_1, k_2) r := e^{i k_1 x_1} T(k_1, k_2)[e^{- i k_1 \cdot} r]
\ee
($e^{-i k_1 \cdot}$ means the function $(x_1, x_2)\mapsto e^{-i k_1 x_1}$) then
\be\label{per}
H(k_1+2\pi m, k_2) = H(k_1, k_2)\quad (m\in \Z)
\ee
holds, whenever $k_1$ is not a pole of $T(\cdot, k_2)$.
\end{remark}
Choose a number $$\tau_1\in 2\pi \N$$ with the properties from theorem \ref{freeofpoles}.
This $\tau_1$ will be fixed for the rest of the paper.
Consider the following set in the complex $k_1$-plane:
$$D:=\{z\in\C/2\pi :~|\Im z|< \tau_1 \}.$$
In the following we will study the behavior of the poles
of $T(\cdot,k_2)$ lying in $D$ when $k_2$ varies in $Z$.

\begin{theorem}\label{thmpolesanalytic}
There exists a set discrete $\Ex\subset Z$ not accumulating anywhere in $\cl{Z}$ and
a number $N\in \N_0$ with the following property: the number of poles of $T(\cdot, k_2)$
inside $D$ is equal to $N$ for all $k_2\in Z\setminus \Ex$. Moreover, given any simply connected
domain $\mathcal{U}\subset Z\setminus \Ex$, there exist analytic functions $\{p_j\}_{j=1, \ldots, N}$
defined on $\mathcal{U}$ such that for all $k_2\in\mathcal{U}$,
$$
k_1 \in D~~\text{is a pole of}~~T(\cdot, k_2)~\text{if and only if}~k_1
= p_j(k_2)~\text{for some}~j\in \{1, \ldots, N\}.
$$
\end{theorem}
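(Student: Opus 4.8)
The plan is to realize the poles of $T(\cdot,k_2)$ inside $D$ as the zeros of a holomorphic family of Fredholm operators in two variables and then invoke the general theory of such families (analytic Fredholm theory / the implicit function theorem for algebroidal branches), using Theorem \ref{freeofpoles} to control what happens on the boundary of $D$. First I would set up the two-variable analytic framework. Recall from Proposition \ref{propmero} and its proof that $T(k_1,k_2)=\frac{1}{2\pi}(-\Delta_{\bk}-\lambda\eps_0)^{-1}$ and that $(-\Delta_{\bk}-\lambda\eps_0)$ is, after factoring out an invertible elliptic part, of the form $I - K(k_1,k_2)$ with $K$ a compact-operator-valued function that is \emph{jointly} analytic in $(k_1,k_2)$ on $D\times Z$ (this joint analyticity is immediate from the explicit diagonalization in the Fourier basis $\{e^{i\boldm\cdot\bx}\}$, since the symbol $s(\boldm,\bk)=(\boldm+\bk)^2$ is a polynomial in $\bk$, combined with the Neumann-series / resolvent estimates already used in Theorem \ref{freeofpoles}). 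Then $k_1$ is a pole of $T(\cdot,k_2)$ exactly when $I-K(k_1,k_2)$ is not invertible, i.e. when the function
\beq\nn
d(k_1,k_2) := \det\nolimits_2\!\big(I-K(k_1,k_2)\big)
\eeq
vanishes, where $\det_2$ denotes a regularized (Fredholm) determinant; $d$ is jointly analytic and not identically zero in $k_1$ for any fixed $k_2\in Z$ by Theorem \ref{freeofpoles}.

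Next I would use Theorem \ref{freeofpoles} together with the $2\pi$-periodicity remark to get a pole-free neighborhood of $\partial D$: there is $\tau_1\in 2\pi\N$ so that $T(k_1,k_2)$ exists for all $k_1$ with $|\Im k_1|=\tau_1$ and all $k_2\in Z$, hence (shrinking slightly) for $k_1$ in an open annular neighborhood of $\partial D$ and $k_2$ in $Z$. This lets me write the number of poles inside $D$ (counted with multiplicity) as an argument-principle / contour integral
\beq\nn
N(k_2) = \frac{1}{2\pi i}\oint_{\partial D}\frac{\partial_{k_1} d(k_1,k_2)}{d(k_1,k_2)}\,dk_1,
\eeq
which is a continuous $\Z$-valued function of $k_2$ on the connected set $Z$, hence constant, call it $N$. (One should check $Z$ is connected; it is, being a union of two overlapping rectangles.) Defining $\Ex$ as the set of $k_2\in Z$ at which $d(\cdot,k_2)$ has a multiple zero inside $D$ — equivalently where the resultant of $d(\cdot,k_2)$ and $\partial_{k_1}d(\cdot,k_2)$ over $D$ vanishes — gives a discrete subset of $Z$ (it is the zero set of an analytic function of $k_2$ that is not identically zero, since for $\Im k_2 = \tau_1'$ large the pole-localization of Step 3 / Theorem \ref{prop1} shows the poles are simple, or alternatively one exhibits one $k_2$ with simple poles directly), and $\Ex$ does not accumulate in $\cl Z$ because of the uniform pole-free annular neighborhood of $\partial D$ and the fact that $Z$ together with that annular strip forms a region on whose closure $d$ is analytic.

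Finally, over a simply connected $\cU\subset Z\setminus\Ex$, the zeros of $d(\cdot,k_2)$ in $D$ are simple and depend continuously on $k_2$ (continuity of roots, or the Steinberg-type result quoted in the text); by the holomorphic implicit function theorem applied at each simple zero, each such zero extends locally to an analytic function $k_2\mapsto p_j(k_2)$, and simple connectedness of $\cU$ together with the constancy of the count $N$ on $Z\setminus\Ex$ shows there is no monodromy, so the $N$ local branches patch into $N$ globally defined analytic functions $p_1,\dots,p_N$ on $\cU$ whose values are precisely the poles in $D$. The main obstacle I expect is a bookkeeping one rather than a conceptual one: establishing cleanly the \emph{joint} analyticity of $K(k_1,k_2)$ (and hence of $d$) in a neighborhood of $\cl D\times \cl Z$ and verifying that $\Ex$ genuinely cannot accumulate on $\cl Z$ — both of which rest on reusing the resolvent estimates of Theorem \ref{prop1}/Theorem \ref{freeofpoles} uniformly — and arguing carefully that $d(\cdot,k_2)\not\equiv 0$ so that the argument principle is applicable; the promised appendix development replacing Steinberg's machinery is exactly where these points get discharged.
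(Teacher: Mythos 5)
Your route is genuinely different from the paper's. The paper does not construct a determinant itself: it invokes Kuchment's theorem (Theorem \ref{thmF} in the appendix) to obtain, once and for all, an entire function $F(\bk,\lambda)$ on $\C^2\times\C$, $2\pi\Z^2$-periodic in $\bk$, whose zero set in $k_1$ is exactly the set of poles of $T(\cdot,k_2)$; it then applies the Weierstrass preparation theorem at each pole of $T(\cdot,k_2^0)$ and quotes the classical theory of roots of polynomials with analytic coefficients (constancy of the number of roots off a discrete exceptional set, Puiseux expansions, analyticity of the roots on simply connected sets), finishing with a system of small circles on which $|F|>0$ and an overlapping-disc argument. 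Your use of Theorem \ref{freeofpoles} to keep $\partial D$ pole-free, and the argument principle in place of the circle-counting, are in the same spirit and would be acceptable substitutes for parts of that argument.

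However, there are genuine gaps. First, the joint analyticity of $K(k_1,k_2)$ (hence of $d$) on $D\times Z$ is not ``immediate'': $Z$ is unbounded in the imaginary direction, so no single shift $\mu$ makes $-\Delta_{\bk}+\mu$ invertible on all of $D\times Z$ (the symbol $(\boldm+\bk)^2+\mu$ is not uniformly bounded away from zero there --- controlling it is exactly what Theorem \ref{prop1} and Lemma \ref{lemsymbolest} do, and only on the special set $\Lines$). You must work locally in $k_2$ with $\mu$ depending on the region and patch, which is manageable but is precisely the nontrivial content the paper outsources to Theorem \ref{thmF}. Second, the ``resultant of $d(\cdot,k_2)$ and $\partial_{k_1}d(\cdot,k_2)$ over $D$'' is not a defined object for transcendental entire functions; to exhibit $\Ex$ as the zero set of an analytic function of $k_2$ (and to get non-accumulation in $\cl{Z}$, which lives in the $k_2$-plane, not in the annular neighborhood of $\partial D$ in the $k_1$-plane) you need the local Weierstrass-polynomial reduction, or an argument that the discriminant, as a symmetric function of the roots, is analytic --- that is an argument, not a name. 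Third, your $N(k_2)$ from the argument principle counts zeros with multiplicity, while the theorem's $N$ counts distinct poles; these agree only if all zeros are simple off $\Ex$, which you assert but do not prove (the paper's formulation via the roots of the Weierstrass polynomial sidesteps this and still yields analytic $p_j$ even when a root has constant multiplicity greater than one). Each gap is fillable, but together they amount to most of the actual work.
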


The proof can be found in the appendix.

Now we introduce the set $Z_0$. Recall the definition of $\cP$ in \eqref{eqP}.

\begin{proposition}\label{propdelta0}
There exists a simply connected open set $$Z_0\subset Z\setminus \Ex$$ in the complex $k_2$-plane,
such that $Z_0\cap\cP$ has positive one-dimensional Lebesgue measure in $[-\pi,\pi]$,
and a $\delta_0>0$ with the following property: if
$k_2$ varies in $Z_0\cap\cP$ then one and only one of the following alternatives
holds for each $p_j$:
\begin{enumerate}
\item[(i)] $p_j(k_2)\in \R$
\item[(ii)] $|\Im p_j(k_2)| > 2\delta_0$.
\end{enumerate}
Here $\{p_j\}_{j=1,\ldots N}$ is the collection of analytic functions from theorem \ref{thmpolesanalytic} defined
on $Z_0$.
\end{proposition}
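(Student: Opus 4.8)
The plan is to localize the problem near a real $k_2$-interval on which $\cP$ is plentiful, transport the poles analytically there via Theorem \ref{thmpolesanalytic}, and then read off the dichotomy from the identity theorem for real-analytic functions, discarding only a null subset of $\cP$. \emph{Choosing a rectangle.} Since $\Ex$ does not accumulate in $\cl{Z}$, the set $\Ex\cap[\theta,\pi]$ is finite, so $(\theta,\pi)\setminus\Ex$ is a finite union of open intervals; because $\cP\cap[\theta,\pi]$ has positive measure (by the choice of $\theta$ made after \eqref{eqP}), one of these intervals, call it $I_0$, satisfies $|\cP\cap I_0|>0$. Fix a compact subinterval $[a_0,b_0]\subset I_0$ with $|\cP\cap(a_0,b_0)|>0$. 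As $\Ex$ is discrete and disjoint from a slightly larger compact interval $[a_0-\eta,b_0+\eta]\subset I_0$, and since $\theta<a_0-\eta$ and $b_0+\eta<\pi$, one may fix $\eta>0$ and $h_1\in(0,\delta)$ so small that
\[
R_1:=\{z\in\C:\ \Re z\in(a_0-\eta,\,b_0+\eta),\ |\Im z|<h_1\}
\]
is a simply connected domain contained in $Z\setminus\Ex$. Applying Theorem \ref{thmpolesanalytic} with $\mathcal{U}=R_1$ gives analytic functions $p_1,\dots,p_N$ on $R_1$ whose graphs are precisely the poles of $T(\cdot,k_2)$ inside $D$.

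\emph{The dichotomy.} For real $t\in I_1:=(a_0-\eta,b_0+\eta)$ set $g_j(t):=\Im p_j(t)$; each $g_j$ is real-analytic on the connected interval $I_1$, so by the identity theorem either $g_j\equiv 0$ on $I_1$ or its zero set $\mathcal{N}_j$ is discrete, hence countable. Put $J_0:=\{j:\ g_j\equiv 0 \text{ on } I_1\}$. Removing the countable set $\bigcup_{j\notin J_0}\mathcal{N}_j$ from the positive-measure set $\cP\cap(a_0,b_0)$ leaves a set of positive measure on which $g_j(t)\neq 0$ for every $j\notin J_0$. Hence, defining for $t>0$ the open set
\[
V_t:=\{z\in R_1:\ |\Im p_j(z)|>2t \text{ for all } j\notin J_0\},
\]
which increases as $t$ decreases to $0$, we get $\bigcup_{t>0}(V_t\cap\R\cap\cP)\supseteq\{s\in\cP\cap(a_0,b_0):\ g_j(s)\neq 0\ \forall j\notin J_0\}$, a set of positive measure. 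Therefore $|V_{\delta_0}\cap\R\cap\cP|>0$ for some $\delta_0>0$, which we fix (if $J_0=\{1,\dots,N\}$ this is trivial with any $\delta_0$).

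\emph{Extracting $Z_0$.} Since $V_{\delta_0}\cap\R\cap\cP$ has positive one-dimensional Lebesgue measure, it possesses a density point $t_0$, and $t_0\in V_{\delta_0}$, which is open in $\C$. Let $Z_0:=\{z\in\C:\ |z-t_0|<r\}$ with $r>0$ small enough that $Z_0\subset V_{\delta_0}$; then $Z_0$ is simply connected and $Z_0\subset V_{\delta_0}\subset R_1\subset Z\setminus\Ex$. Moreover $Z_0\cap\R=(t_0-r,t_0+r)$, so $Z_0\cap\cP=(t_0-r,t_0+r)\cap\cP$ has positive measure for all small $r$ by the density property of $t_0$. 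As the functions on $Z_0$ promised by Theorem \ref{thmpolesanalytic} we take the restrictions of $p_1,\dots,p_N$ from $R_1$. Now let $k_2\in Z_0\cap\cP$; this is a real point of $V_{\delta_0}\cap\R$. For $j\in J_0$ we have $\Im p_j(k_2)=g_j(k_2)=0$, so $p_j(k_2)\in\R$ (alternative (i)); for $j\notin J_0$ we have $|\Im p_j(k_2)|>2\delta_0$ (alternative (ii)). These conditions are mutually exclusive, so exactly one alternative holds for each $p_j$, as required.

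The one genuinely delicate point is this last extraction: the set of real $k_2$ on which the dichotomy holds need not be open, so one must thicken it into $\C$ using the continuity of the $p_j$ and then invoke a Lebesgue density point to guarantee that the resulting open, simply connected $Z_0$ still meets $\cP$ in positive measure. No spectral input is needed here; alternative (i) simply records the possibility that some poles stay real (i.e.\ that $\lambda$ lies in a band of the background operator), and the entire argument is analytic and measure-theoretic.
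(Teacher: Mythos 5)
Your proof is correct and follows essentially the same route as the paper's (which is only sketched there): restrict the analytic pole functions $p_j$ from Theorem \ref{thmpolesanalytic} to the real axis, use real-analyticity of $\Im p_j$ to get the dichotomy ``identically real'' versus ``zero set discrete,'' and then shrink to a ball where $\cP$ retains positive measure. Your version is a more careful elaboration — in particular the explicit $V_{\delta_0}$ construction and the Lebesgue density point replace the paper's informal ``take any ball $Z_0$ sufficiently close to $\kappa^*$'' — but the underlying argument is the same.
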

This proposition means: if $k_2$ varies in $Z_0\cap\cP$, each pole of $T(\cdot,k_2)$ either stays on
the real axis or it keeps a
distance greater than $2\delta_0$ from the real axis.

\begin{proof}
We sketch the easy proof. Choose any $\kappa^{*}\in Z\cap \cP\setminus \Ex$ and a small
ball $\til{Z_0}\subset Z$ around it, which does not contain any point of $\Ex$
and such that $\widetilde{Z_0}\cap \cP$ has positive Lebesgue measure.
By theorem \ref{thmpolesanalytic} the poles can be represented by
analytic functions $p_j$ on $\til{Z_0}$. If $p_j(\kappa^*)\in \R$,
then by power series expansion we see that either $p_j(k_2)$ must be real for all real $k_2$
close to $\kappa^*$ or $p_j(k_2)\notin \R$ for all real $k_2\neq \kappa^*$ close to $\kappa^*$. On the other hand,
if $p_j(\kappa^*)\notin\R$ then for all $k_2$ close to $\kappa^*$ we also have $p_j(k_2)\notin \R$.
Now take any ball $Z_0\subset \til{Z_0}$ sufficiently close to $\kappa^*$
such that $Z_0\cap \cP$ has positive one-dimensional Lebesgue measure, but $\kappa^*\notin Z_0$
and the previous reasoning applies.
Now we are in the situation that some of the $p_j(k_2)$ are real for all $k_2\in Z_0\cap \cP$ and
the other poles have a positive distance to the real axis (or one of the cases occurs exclusively).
\end{proof}

\begin{figure}[htbp]
\caption{Illustration of the $k_2$-plane and the sets $\til{Z_0}, Z_0$
used in the proof of proposition \ref{propdelta0}. The black points indicate elements in the 
set $\Ex$.
\label{figZZ0near}}
\begin{center}
\begin{picture}(0,0)%
\includegraphics[scale=0.4]{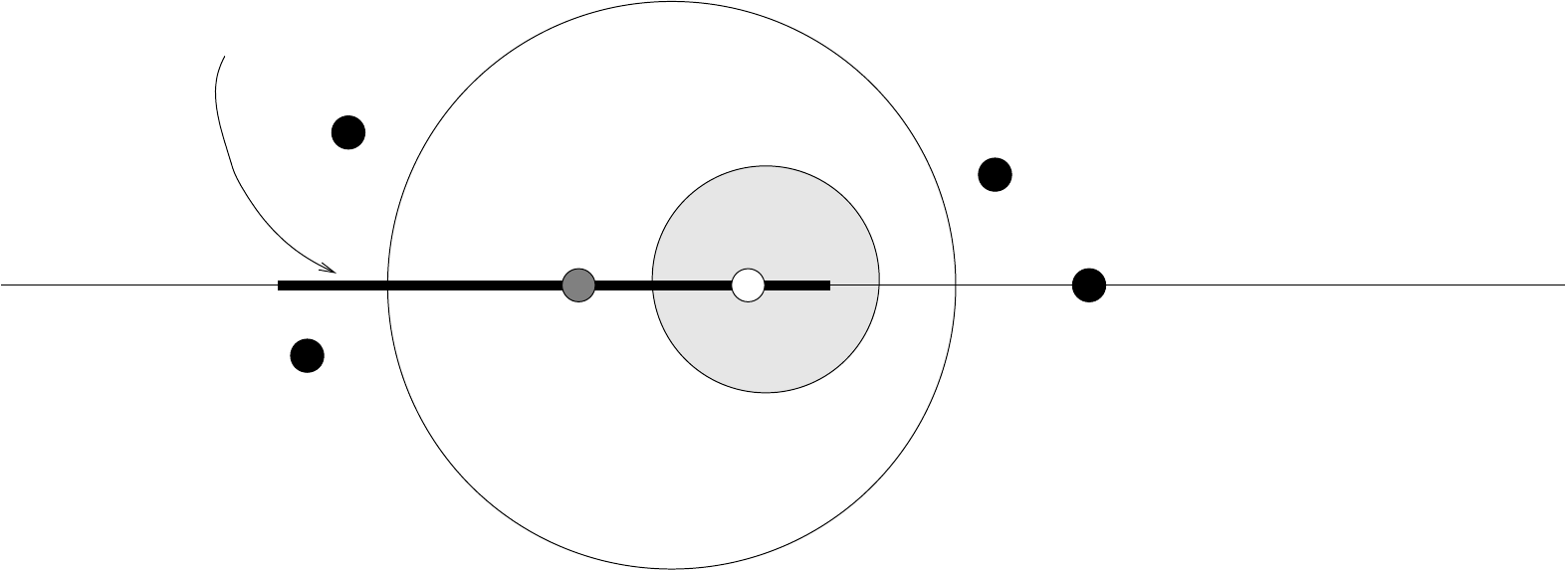}%
\end{picture}%
\setlength{\unitlength}{4144sp}%
\begingroup\makeatletter\ifx\SetFigFont\undefined%
\gdef\SetFigFont#1#2#3#4#5{%
  \reset@font\fontsize{#1}{#2pt}%
  \fontfamily{#3}\fontseries{#4}\fontshape{#5}%
  \selectfont}%
\fi\endgroup%
\put(2400,1000){\text{$Z_0$}}
\put(1800,1300){\text{$\til{Z_0}$}}
\put(1700,630){\text{$\kappa^*$}}
\begin{picture}(13000,1500)(52,956)
\put(700,2600){\text{$\mathcal{P}$}}
\end{picture}%
\end{center}
\end{figure}

\begin{figure}[htbp]
\caption{Illustration of the path $\Gamma$ and its neighborhood $\Ne(\Gamma)$
in the complex $k_2$-plane. The black points indicate
elements of the exceptional set $\Ex$, the white point is $\Gamma(0)$.\label{figZZ0}}
\begin{center}
\begin{picture}(0,0)%
\includegraphics[scale=0.5]{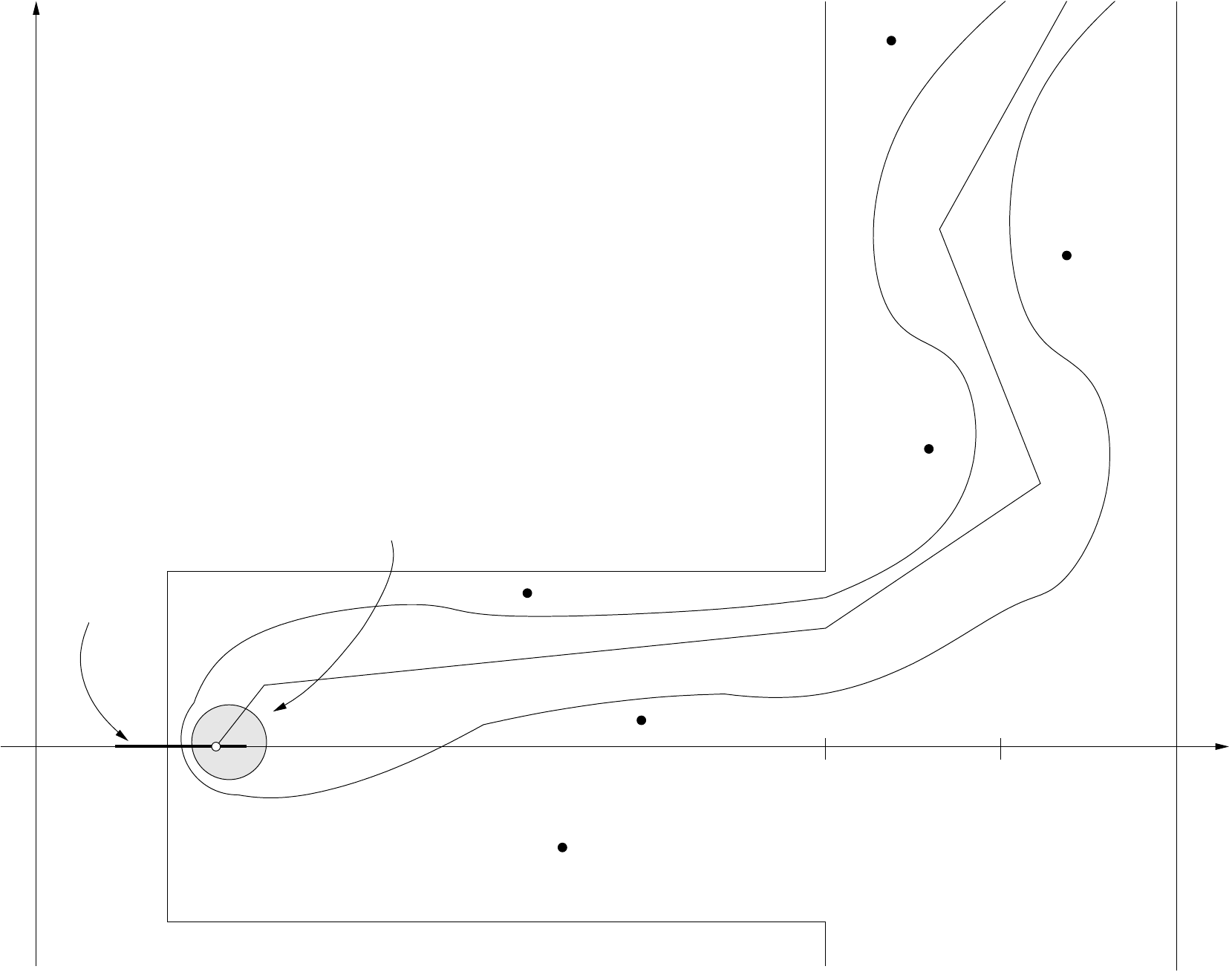}%
\end{picture}%
\setlength{\unitlength}{4144sp}%
\begingroup\makeatletter\ifx\SetFigFont\undefined%
\gdef\SetFigFont#1#2#3#4#5{%
  \reset@font\fontsize{#1}{#2pt}%
  \fontfamily{#3}\fontseries{#4}\fontshape{#5}%
  \selectfont}%
\fi\endgroup%
\put(500, 1800){\text{$\mathcal{P}$}}
\put(1900, 2300){\text{$Z_0$}}
\put(4000, 950){\text{$\pi-\delta$}}
\put(5100, 950){\text{$\pi$}}
\put(6100, 950){\text{$\pi+\delta$}}
\put(6150, 1250){\text{$\Re k_2$}}
\put(5100, 200){\text{$Z$}}
\put(50, 1000){\text{$0$}}
\put(300, 4850){\text{$\Im k_2$}}
\begin{picture}(13000,5000)(52,956)
\end{picture}%
\end{center}
\end{figure}

\subsection{Derivation of a Fredholm problem on $\Omega$}
Recall that the partial Floquet-Bloch transform $V_{x_1}$  in $x_1$-direction is given by \eqref{Vx1}. 
\begin{proposition}\label{proponw}
Suppose $k_2\in Z_0\cap \cP$ is fixed and $0\neq g\in H^2_{\text{per}}(S)$ solves
\be\label{eq7}
(-\Delta_{k_2}-\lambda\eps(\bx))g=0 \quad~\text{on}~~S.
\ee
Then there exists an analytic function $k_1\mapsto w(\cdot, k_1)\in L^2(\Omega)$ defined
for
$$
k_1 \in \mathcal{O} := \{ z \in \C : |\Im z| < 2\delta_0 \}
$$
such that $(V_{x_1} g)(\cdot, k_1) = w(\cdot, k_1)$ for
almost all $k_1\in [-\pi, \pi]$ and
\be\label{eq13}
w(\bx,k_1)-2 \pi \lambda (T(k_1,k_2)V_{x_1}\eps_1 V_{x_1}^{-1}w)(\bx,k_1)=0\quad (\bx \in \Omega)
\ee
for all $k_1 \in \mathcal{O}$.
\end{proposition}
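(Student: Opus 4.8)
The plan is to pass to the cell $\Omega$ by the $x_1$-Floquet--Bloch transform $V_{x_1}$, solve the resulting fibre equation by inverting the cell operator, and then show that the solution family, as a function of $k_1$, is analytic on all of $\mathcal{O}$. First I would rewrite \eqref{eq7} as $(-\Delta_{k_2}-\lambda\eps_0)g=\lambda\eps_1 g$ and note, using $\operatorname{supp}\eps_1\subset(0,1)\times\R$, that $f:=\lambda\eps_1 g\in L^2(S)$ is supported in $\Omega$; write $f=\lambda\,\widehat{f_0}$ with $f_0:=(\eps_1 g)|_\Omega\in L^2(\Omega)$. Applying $V_{x_1}$ --- which intertwines the $x_1$-periodic operator $-\Delta_{k_2}-\lambda\eps_0$ with the direct integral of the cell operators $-\Delta_{(k_1,k_2)}-\lambda\eps_0$ on $H^2_{\mathrm{per}}(\Omega)$ (legitimate since $g\in H^2_{\mathrm{per}}(S)$) --- and evaluating the right-hand side by \eqref{Vcomp}, one gets for almost every $k_1\in(-\pi,\pi)$
$$(-\Delta_{(k_1,k_2)}-\lambda\eps_0)\,(V_{x_1}g)(\cdot,k_1)=\lambda\,(2\pi)^{-1/2}\,e^{-ik_1\cdot}f_0 .$$
For the fixed $k_2\in Z_0\cap\cP$, Theorem \ref{freeofpoles} together with Proposition \ref{propmero} makes $T(\cdot,k_2)$ meromorphic on $\C$, so the cell operator is invertible for all but finitely many real $k_1$, and for those
$$(V_{x_1}g)(\cdot,k_1)=\sqrt{2\pi}\,\lambda\,T(k_1,k_2)\bigl[e^{-ik_1\cdot}f_0\bigr]=:w(\cdot,k_1).$$
The right-hand side defines $w(\cdot,k_1)$ as a meromorphic $L^2(\Omega)$-valued function on $\C$ which agrees with $(V_{x_1}g)(\cdot,k_1)$ for a.e.\ $k_1\in[-\pi,\pi]$; from \eqref{defH}--\eqref{per} one reads off $w(\cdot,k_1+2\pi m)=e^{-2\pi i m x_1}w(\cdot,k_1)$, so the poles of $w$ form a $2\pi$-periodic set.

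The crux is to show that $w$ has no poles in $\mathcal{O}=\{|\Im z|<2\delta_0\}$; granting this, $w$ is analytic there. Its poles lie among those of $T(\cdot,k_2)$ in the strip $D$, which by Theorem \ref{thmpolesanalytic} are the $p_j(k_2)$, $j=1,\dots,N$ (we may assume $2\delta_0<\tau_1$). By Proposition \ref{propdelta0}, for $k_2\in Z_0\cap\cP$ each $p_j(k_2)$ is either real or satisfies $|\Im p_j(k_2)|>2\delta_0$; the latter lie outside $\mathcal{O}$, so the only candidate poles of $w$ inside $\mathcal{O}$ are real ones. But a real pole $p$ of $w$ would, after transporting it by the quasi-periodicity into a representative $p'\in[-\pi,\pi]$, force $\|w(\cdot,k_1)\|_{L^2(\Omega)}\gtrsim|k_1-p'|^{-1}$ near $p'$, whence $\int_{-\pi}^\pi\|w(\cdot,k_1)\|_{L^2(\Omega)}^2\,dk_1=\infty$ (a one-sided neighbourhood suffices when $p'=\pm\pi$), contradicting $w=V_{x_1}g\in L^2((-\pi,\pi),L^2(\Omega))$. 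Hence $w$ is pole-free, i.e.\ analytic, on $\mathcal{O}$.

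Finally, for \eqref{eq13}: for real $k_1\in(-\pi,\pi)$ one has $V_{x_1}^{-1}w=g$, hence $\eps_1 V_{x_1}^{-1}w=\eps_1 g$ is supported in $\Omega$ and $(V_{x_1}\eps_1 V_{x_1}^{-1}w)(\cdot,k_1)=(2\pi)^{-1/2}e^{-ik_1\cdot}f_0$ by \eqref{Vcomp}, so \eqref{eq13} is precisely the identity defining $w$ above; since both sides are analytic in $k_1$ on $\mathcal{O}$, the identity propagates to all of $\mathcal{O}$. I expect the genuine obstacle to be the pole-free claim: it requires combining the ``soft'' information that $V_{x_1}g$ is merely square-integrable in $k_1$ (which by itself controls only the open interval $(-\pi,\pi)$, so the $2\pi$-quasi-periodicity of $w$ is needed to also exclude poles at $\pm\pi$) with the geometric input of Proposition \ref{propdelta0}, which pushes every non-real pole out of the strip $\mathcal{O}$. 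The supporting technical points --- the precise intertwining of $V_{x_1}$ with the cell operators on $H^2_{\mathrm{per}}$, and the fact that $V_{x_1}\eps_1 V_{x_1}^{-1}$ applied to the continued family $w$ equals the entire function $(2\pi)^{-1/2}e^{-ik_1\cdot}f_0$ --- are routine.
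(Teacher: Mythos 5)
Your proposal is correct and follows essentially the same route as the paper: apply $V_{x_1}$ to get the fibre equation on $\Omega$, define $w$ by inverting the cell operator applied to the entire function $(2\pi)^{-1/2}e^{-ik_1\cdot}(\eps_1 g)|_\Omega$, use Proposition \ref{propdelta0} to confine the candidate poles in $\mathcal{O}$ to the real axis, and exclude those by the square-integrability of $V_{x_1}g$ in $k_1$. Your treatment is in fact slightly more explicit than the paper's (which only says "an elementary argument shows $\til{w}$ cannot have poles in $[-\pi,\pi]$"), in particular in using the $2\pi$-quasi-periodicity of $w$ to handle possible poles at $\pm\pi$.
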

\begin{proof}
Let $\til{w}(\cdot,k_1)\in L^2(\Om)$ be the Floquet transform of $g$ in $x_1$-direction:
\be \label{eq9}
\til{w}(\cdot,k_1)=(V_{x_1}g)(\cdot,k_1).
\ee
By applying the isometry $V_{x_1}$ to \eqref{eq7} and recalling the periodicity of $\eps_0$
in $x_1$-direction, we see that $\til{w}$ solves
\be\nn
(-\Delta_{(k_1, k_2)}-\lambda \eps_0) \til{w}(\cdot, k_1) - \lambda V_{x_1}(\eps_1 g)(\cdot, k_1) = 0.
\ee
for almost all $k_1\in[-\pi,\pi]$. $(-\Delta_{(k_1, k_2)}-\lambda \eps_0)^{-1}=2\pi T(k_1, k_2)$
exists for all $k_1$ except discretely many points. At all points $k_1$, where $T(k_1, k_2)$ exists,
\be\label{eq8}
\til{w}(\cdot,k_1)- 2\pi\lambda (T(k_1,k_2)V_{x_1}\eps_1 V_{x_1}^{-1}\til{w})(\cdot,k_1)=0
\ee
holds.
Since we want to arrive at \eqref{eq13}, \eqref{eq8} suggests to define $w$ by
\be\label{eq10}
w(\cdot,k_1):= 2\pi\lambda (T(k_1,k_2)V_{x_1}\eps_1 V_{x_1}^{-1}\tilde w)(\cdot,k_1).
\ee
By \eqref{suppeps}, $\eps_1$ has compact support in $x_1$-direction, and so
$(V_{x_1}\eps_1 V_{x_1}^{-1}\til w)(\cdot,k_1)$ is analytic in the variable $k_1$ on $\C$ with
values in $L^2(\Om)$ (see \eqref{Vcomp}). $k_1\mapsto T(k_1,k_2)$ is a meromorphic
operator valued function with at most finitely many poles in $\mathcal{O}$. By proposition \ref{propdelta0},
these poles lie on the real axis.
So the right-hand side of \eqref{eq10} makes sense for $k_1\in \mathcal{O}$,
except when $k_1$ is a pole of $T(\cdot, k_2)$.

We claim that the poles of $w$ can actually be removed by continuity, and thus $w$ is
analytic on the whole of $\mathcal{O}$.

By the relation \eqref{eq9}, we have $\til{w} = w$ a.e. on the real axis.
But $\til{w}(\cdot, k_1)$ is square integrable on $[-\pi,\pi]$ with respect to
$k_1$, so an elementary argument shows $\til{w}$ cannot have poles in $[-\pi, \pi]$.
This proves that $w$ is analytic on the whole of $\mathcal{O}$.
\end{proof}

\begin{proposition}\label{propeqonOmega} Let $k_2\in Z_0\cap\cP$ be fixed.
Any $g\neq0$ solving \eqref{eq7} solves
\be\label{eq12}
g|_\Omega-\lambda A(k_2)\eps_1 g|_\Omega=0,
\ee
where $A(k_2)$ is defined by
\be\label{eqdefA}
A(k_2)r=\int_{[-\pi,\pi]+i\delta_0}H(k_1,k_2)r~dk_1
\ee
with $H(k_1,k_2)$ defined by \eqref{defH} and $\delta_0$ from proposition \ref{propdelta0}.
Moreover, $g|_\Omega\neq 0$.
\end{proposition}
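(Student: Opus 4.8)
The plan is to convert the Fredholm-type identity \eqref{eq13} for the function $w$ produced by Proposition \ref{proponw} into an identity for $g|_\Omega$ alone, and then to deform the $k_1$-integral from $[-\pi,\pi]$ onto the line $[-\pi,\pi]+i\delta_0$ occurring in the definition \eqref{eqdefA} of $A(k_2)$.

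First I would apply Proposition \ref{proponw} to $g\in H^2_{\text{per}}(S)$ and let $w$ be the resulting function, analytic for $k_1\in\mathcal{O}=\{|\Im z|<2\delta_0\}$ with values in $L^2(\Omega)$, satisfying $w(\cdot,k_1)=(V_{x_1}g)(\cdot,k_1)$ for almost every real $k_1$. Then $V_{x_1}^{-1}w=g$, hence $\eps_1 V_{x_1}^{-1}w=\eps_1 g$; since $\operatorname{supp}\eps_1\subset(0,1)\times\R$ and $S=\R\times(0,1)$, the function $\eps_1 g$ vanishes on $S\setminus\Omega$, so \eqref{Vcomp} gives $(V_{x_1}\eps_1 V_{x_1}^{-1}w)(\cdot,k_1)=(2\pi)^{-1/2}e^{-ik_1\cdot}\,\eps_1 g|_\Omega$. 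Substituting this into \eqref{eq13}, multiplying by $e^{ik_1x_1}$, and using \eqref{defH}, the equation for $w$ becomes
\[
e^{ik_1x_1}\,w(\bx,k_1)=\sqrt{2\pi}\,\lambda\,H(k_1,k_2)[\eps_1 g|_\Omega](\bx)\qquad(k_1\in\mathcal{O},\ \bx\in\Omega).
\]
I would extract two things from this. First, since $w$ is analytic on $\mathcal{O}$, the factor $e^{ik_1x_1}$ is entire, and $\lambda\neq0$ (if $\lambda=0$ then $u$ is a harmonic $L^2(\R^2)$-function, hence $u=0$), the $L^2(\Omega)$-valued map $k_1\mapsto H(k_1,k_2)[\eps_1 g|_\Omega]$ is analytic on all of $\mathcal{O}$; in other words, when applied to that \emph{particular} element, the real-axis poles of $T(\cdot,k_2)$ (Proposition \ref{propdelta0}) are removable. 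Second, inverting $V_{x_1}$ — no periodic extension being needed since $x_1\in(0,1)$ on $\Omega$ — yields
\[
g|_\Omega(\bx)=\frac{1}{\sqrt{2\pi}}\int_{-\pi}^{\pi}e^{ik_1x_1}w(\bx,k_1)\,dk_1=\lambda\int_{-\pi}^{\pi}H(k_1,k_2)[\eps_1 g|_\Omega](\bx)\,dk_1 .
\]

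Next I would push the contour up. The $L^2(\Omega)$-valued function $F:k_1\mapsto H(k_1,k_2)[\eps_1 g|_\Omega]$ is analytic on $\mathcal{O}$ and $2\pi$-periodic (by \eqref{per} off the poles, hence everywhere by analyticity). Applying Cauchy's theorem for Banach-space-valued analytic functions on the rectangle with corners $-\pi,\ \pi,\ \pi+i\delta_0,\ -\pi+i\delta_0$, which lies in $\{0\le\Im z\le\delta_0\}\subset\mathcal{O}$ and encloses no pole of $F$, the two vertical edges cancel by periodicity, so $\int_{-\pi}^{\pi}F\,dk_1=\int_{[-\pi,\pi]+i\delta_0}F\,dk_1$. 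Combined with the previous display and the definition \eqref{eqdefA} of $A(k_2)$, this gives exactly $g|_\Omega-\lambda A(k_2)\eps_1 g|_\Omega=0$. Finally, if $g|_\Omega=0$, then the identity $e^{ik_1x_1}w(\bx,k_1)=\sqrt{2\pi}\lambda H(k_1,k_2)[\eps_1 g|_\Omega](\bx)$ forces $w\equiv0$ on $\mathcal{O}$, hence $V_{x_1}g=w=0$ almost everywhere on $[-\pi,\pi]$; as $V_{x_1}$ is an isometry, $g=0$, contradicting $g\neq0$, so $g|_\Omega\neq0$.

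The step I expect to be the main obstacle is the assertion that $H(\cdot,k_2)[\eps_1 g|_\Omega]$ is analytic across the real axis although $T(\cdot,k_2)$ is merely meromorphic there. This rests entirely on Proposition \ref{proponw}: it is precisely the removability of the poles of $w$ (equivalently, of $T(\cdot,k_2)$ evaluated at $e^{-ik_1\cdot}\eps_1 g|_\Omega$) that legitimizes both the integral over $[-\pi,\pi]$ and its deformation to $[-\pi,\pi]+i\delta_0$. Everything else is routine bookkeeping with the two Floquet transforms and the support condition \eqref{suppeps}.
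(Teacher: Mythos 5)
Your derivation of the identity \eqref{eq12} is correct and follows the paper's own route essentially verbatim: pass to $w$ from Proposition \ref{proponw}, use the support condition \eqref{suppeps} together with \eqref{Vcomp} to rewrite $V_{x_1}\eps_1 V_{x_1}^{-1}w$ explicitly, invert $V_{x_1}$ on $\Omega$, and then shift the contour from $[-\pi,\pi]$ to $[-\pi,\pi]+i\delta_0$ by Cauchy's theorem, the lateral edges cancelling by the periodicity \eqref{per}. You also correctly identify the crux, namely that the analyticity of $w$ on $\mathcal{O}$ (Proposition \ref{proponw}) is exactly what makes the real-axis poles of $T(\cdot,k_2)$ removable when applied to $e^{-ik_1\cdot}\eps_1 g|_\Omega$, legitimizing both the integral and its deformation; this is the same observation the paper makes. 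Your bookkeeping of the $2\pi$-factors is consistent with \eqref{eq13}, \eqref{Vcomp} and \eqref{defH}.

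Where you genuinely diverge is the claim $g|_\Omega\neq 0$. The paper argues by unique continuation: if $g$ vanished on the open set $\Omega$, then, as a solution of the elliptic equation \eqref{eq7}, it would vanish identically on $S$. You instead exploit the structure of the fixed-point identity itself: $g|_\Omega=0$ forces $\eps_1 g=0$ on $S$ (by the support condition), hence $w\equiv 0$ via \eqref{eq13}, hence $V_{x_1}g=0$ a.e.\ and $g=0$ by isometry. Both arguments are valid. Yours is more elementary in that it avoids citing a unique continuation theorem, at the price of routing the nondegeneracy claim through the Floquet/Fredholm machinery of Proposition \ref{proponw}; the paper's version is more robust in that it uses only that $g$ solves an elliptic PDE and vanishes on an open set. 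Your aside that $\lambda\neq 0$ may be assumed (since a harmonic $H^2(\R^2)$ function vanishes) is a legitimate way to divide by $\lambda$ when transferring analyticity from $e^{ik_1x_1}w$ to $H(\cdot,k_2)[\eps_1 g|_\Omega]$; the paper leaves this implicit.
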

\begin{proof}
First we prove that $g|_\Omega\neq 0$; thus, assume the contrary.
Using the fact that $g$ solves
\eqref{eq7} and a unique continuation principle (see
\cite{Protter}), we conclude $g \equiv 0$ on the whole of $S$, a
contradiction.

Now we show that \eqref{eq12} holds. With $w(\cdot,k_1)=V_{x_1}g$, we get from \eqref{eq13}
\be\nn
V_{x_1}^{-1}[w-2\pi \lambda T(\cdot,k_2)(V_{x_1}\eps_1 V_{x_1}^{-1}w)] = 0.
\ee
The inverse Floquet transform of the term containing $T(\cdot,k_2)$ can be rewritten as
$$
\lambda \int_{[-\pi, \pi]} H(k_1, k_2)(\eps_1 g)~d k_1,
$$
where we used $(V_{x_1} \eps_1 g)(k_1, \bx) = (2\pi)^{-\frac{1}{2}} e^{-i k_1 x_1} \eps_1(\bx) g(\bx)$
(see \eqref{Vcomp}).
Observe carefully that from \eqref{eq13} we know that the integrand $k_1\mapsto H(k_1, k_2)(\eps_1 g)$ is analytic
on the set $\mathcal{O}$ from proposition \ref{proponw}.
Thus we may use Cauchy's integral theorem to deform
the integral over $[-\pi, \pi]$ into an integral over $[-\pi, \pi]+i\delta_0$ (lateral
contributions cancel due to the periodicity \eqref{per}):
\be
\nn \lambda \int_{[-\pi,\pi]+i\delta_0}H(k_1, k_2)(\eps_1 g)~dk_1.
\ee
Hence we arrive at the following problem for $g$ posed on $\Om$:
\be\label{eq121}\nn
g|_\Omega-\lambda A(k_2)\eps_1 g|_\Omega =0,
\ee
where $A(k_2):L^2(\Om)\to L^2(\Om)$ is as in the statement of the proposition.
\end{proof}

Using proposition \ref{propeqonOmega} we derive our final Fredholm equation.
Since $\til{v}=\til{v}(\cdot,k_2)$ solves \eqref{eq71} we have that $v:=\til{v}|_\Omega$ solves
\be\label{eqfinalfred}
v-\lambda A(k_2)\eps_1v=0\quad~~\text{on}~\Omega.
\ee
Notice that if $k_2\in Z_0\cap\cP$ then $\til{v}(\cdot,k_2)\neq 0$ and $v\neq 0$.

Equation \eqref{eqfinalfred} is crucial to prove our main result. In the following section
we will show that the operator $A(k_2)$ is compact and can be defined for $k_2$ in a
certain region of the complex $k_2$-plane
with unbounded imaginary part. Moreover we will see that in that region $k_2\mapsto A(k_2)$ is an analytic
operator-valued function.

\section{Analytic Continuation.}\label{sec_cont}

\subsection{Construction of the analytic continuation}
We will now describe the analytic continuation of the operator $A(k_2)$
to values $k_2$ with large imaginary part, along a certain path
$\Gamma$ lying in the set $Z$ in the complex $k_2$-plane.
\begin{lemma}
There exist a continuous path $$\Gamma : [0,\infty) \to Z\setminus\Ex$$ satisfying
\begin{enumerate}
\item[(i)] $\Gamma(0)\in Z_0\cap\cP$,
\item[(ii)] $t\mapsto \Im \Gamma(t)$ is nondecreasing,
\item[(iii)] $\Im\Gamma(t)\to +\infty$ for $t\to \infty$,
\end{enumerate}
with the property that there exists a simply connected neighborhood
$$\Ne(\Gamma)\subset Z\setminus \Ex$$ of the path $\Gamma$ containing $Z_0$ and a
$N\in \N$ such that the number of poles of $T(\cdot, k_2)$ in $D$ is
equal to $N$ for all $k_2\in \Ne(\Gamma)$. Moreover, there exists a
collection of analytic functions $\{q_j\}_{j=1}^N$,
$$q_j : \Ne(\Gamma) \to \C/2\pi$$
such that the poles of $T(\cdot, k_2)$ in $D$ are exactly given by
$$
q_j(k_2)\quad(j=1, \ldots, N).
$$
\end{lemma}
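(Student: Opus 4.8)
The statement is largely a matter of repackaging Theorem~\ref{thmpolesanalytic}. Once a path $\Gamma$ with properties (i)--(iii) and a simply connected domain $\Ne(\Gamma)\subset Z\setminus\Ex$ containing both $\Gamma$ and $Z_0$ have been produced, applying Theorem~\ref{thmpolesanalytic} with $\mathcal{U}=\Ne(\Gamma)$ immediately yields analytic functions $p_1,\dots,p_N:\Ne(\Gamma)\to D\subset\C/2\pi$ whose values are exactly the poles of $T(\cdot,k_2)$ in $D$; one then sets $q_j:=p_j$, and the integer $N$ is the one already supplied by that theorem --- which equals the pole count inside $D$ for \emph{every} $k_2\in Z\setminus\Ex$, so in particular it is constant $=N$ on $\Ne(\Gamma)$. (The case $N=0$ is not excluded; it merely trivialises everything that follows.) Thus the real content of the lemma is the construction of $\Gamma$ and of $\Ne(\Gamma)$.

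To build $\Gamma$, recall that $Z$ is the union of the \emph{bounded} horizontal slab $\{\Re z\in(\theta,\pi+\delta),\ |\Im z|<\delta\}$ with the vertical strip $\{\Re z\in(\pi-\delta,\pi+\delta)\}$, the latter being the only part of $Z$ unbounded in the imaginary direction; hence every admissible path must eventually escape through that vertical corridor. Choose $\Gamma(0)\in Z_0\cap\cP$ (possible since $Z_0\cap\cP$ has positive Lebesgue measure); by the way $Z_0$ was selected in Proposition~\ref{propdelta0}, $\Gamma(0)$ is real and lies in $(\theta,\pi+\delta)$. On $[0,1]$ let $\Gamma$ run inside the horizontal slab from $\Gamma(0)$ to a point of real part $\pi$, keeping $|\Im\Gamma|<\delta$; on $[1,\infty)$ let $\Gamma$ ascend inside $\{\Re z\in(\pi-\delta,\pi+\delta)\}$ with $\Im\Gamma(t)\to+\infty$. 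Because $\Ex$ does not accumulate anywhere in $\cl{Z}$, only finitely many of its points meet any bounded portion of $Z$; using the transversal room available (values of $\Im z$ in $[0,\delta)$ on the first piece, of $\Re z$ in $(\pi-\delta,\pi+\delta)$ on the second) these points can be circumvented by small detours that keep $t\mapsto\Im\Gamma(t)$ nondecreasing. A last adjustment --- making $\Re\Gamma$ strictly increasing on $[0,1]$ and $\Im\Gamma$ strictly increasing on $[1,\infty)$, so that the two pieces are disjoint except at their junction --- makes $\Gamma$ injective. This produces a continuous injective $\Gamma:[0,\infty)\to Z\setminus\Ex$ satisfying (i)--(iii).

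For $\Ne(\Gamma)$ I would take the union of $Z_0$ (an open ball with $Z_0\subset Z\setminus\Ex$ and $\Gamma(0)\in Z_0$) with a thin open tube $\bigcup_{t\ge0}B(\Gamma(t),\rho(t))$ about the arc, where $\rho>0$ is chosen small enough that (a) every ball $B(\Gamma(t),\rho(t))$ lies inside $Z$ and is disjoint from $\Ex$ --- possible because $\Ex$ is closed and locally finite in $\cl{Z}$ while $\Gamma$ stays in the open set $Z\setminus\Ex$ --- and (b) the tube is a regular neighbourhood of the arc, so that $\Ne(\Gamma)$ becomes a regular neighbourhood of the contractible set $\Gamma([0,\infty))\cup\cl{Z_0}$ (an arc with a disc attached at a single point). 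A standard tubular-neighbourhood argument then shows $\Ne(\Gamma)$ is open, connected and simply connected; by construction it lies in $Z\setminus\Ex$ and contains $\Gamma$ and $Z_0$. Applying Theorem~\ref{thmpolesanalytic} as in the first paragraph completes the proof.

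I expect the main obstacle to be not any single step but the simultaneous bookkeeping in the construction of $\Gamma$: routing it out to $\Im k_2=+\infty$ through the narrow vertical part of $Z$, dodging the discrete set $\Ex$, keeping $\Im\Gamma$ monotone (so detours around points of $\Ex$ may only move sideways or upward, never downward), and still leaving enough transversal slack to thicken $\Gamma$ into a simply connected tube that also engulfs the ball $Z_0$. Each ingredient is elementary, but reconciling all of them is what requires care; it is precisely the monotonicity constraint on $\Im\Gamma$ that forces the escape to be made through the transversally two-dimensional vertical corridor rather than along the thin horizontal slab.
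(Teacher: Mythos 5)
Your proposal is correct and follows essentially the same route as the paper: choose $\Gamma$ escaping through the vertical strip of $Z$ while avoiding the non-accumulating set $\Ex$, separate the closed sets $\Gamma([0,\infty))$ and $\Ex$ to obtain a simply connected neighborhood containing $Z_0$, and then invoke Theorem~\ref{thmpolesanalytic} on that neighborhood to get $N$ and the analytic functions $q_j$. The paper's own proof is far terser (it simply asserts the path exists and cites the separation properties of $\C$), so your version supplies strictly more detail on the same argument.
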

Notice that $\Ne(\Gamma)\cap \cP$ has positive one-dimensional Lebesgue measure.

\begin{proof}
According to theorem \ref{thmpolesanalytic}, the number of poles of $T(\cdot, k_2)$ is
equal to some number $N\in \N$,
as $k_2$ varies in $Z\setminus \Ex$. Since $\Ex$
does not accumulate anywhere in $\overline{Z}$,
it is clear that one can choose a continuous path $\Gamma$ with the
properties in the statement of the lemma. The sets $\Ex$
and $\Gamma([0,\infty)) \subset \C$ are closed.
Hence by the separation properties of the metric space $\C$, there exists
a simply connected neighborhood $\Ne(\Gamma)$ with $\Ex \cap \Ne(\Gamma)=\emptyset$
(see figure \ref{figZZ0} for an illustration). In order to choose the functions
$\{q_j\}$, apply theorem \ref{thmpolesanalytic}.
\end{proof}

Note that in general, the poles are algebraic functions of $k_2$,
i.e. they may behave like complex roots in the vicinity of points of
the exceptional set $\mathcal{E}$.
Here, we get analyticity by avoiding
the exceptional set $\Ex$.

The collection $\{q_j\}_{j=1}^N$ can be written as a disjoint union
$$
\{q_j^+\}_{j=1}^{N^+} \cup \{q_j^-\}_{j=1}^{N^-}
$$
with $N^+ + N^- = N$ and such that
$$
\Im q_j^+(\Gamma(0)) >  \delta_0, ~~~\Im q_j^-(\Gamma(0)) \leq 0
\quad (j=1,\ldots, N^\pm)
$$
with $\delta_0$ from proposition \ref{propdelta0}.

\begin{proposition}\label{propcoin1} For any $k_2\in Z_0\cap \cP$ we have the following representation of $A(k_2)$:
\ben
A(k_2) r &=& \int_{[-\pi,\pi]+i \tau_1} H(k_1, k_2) r~dk_1
+ 2\pi i \sum_{j=1}^{N^+} \res(H(\cdot, k_2) r, q_j^+(k_2)) \label{formA}
\een
for all $r\in L^2(\Omega)$.
In the formula \eqref{formA}, both sides are understood as a functions in $L^2(\Omega)$, and
$$\res(H(\cdot, k_2) r, q_j^+(k_2))$$
denotes the residue of the meromorphic $L^2(\Omega)$-valued
function $k_1\mapsto H(k_1, k_2) r$ at the pole $q_j^+(k_2)$.
Moreover, the right-hand side of
\eqref{formA} makes sense for all $k_2\in \Ne(\Gamma)$ and defines a
continuation of $A(k_2)$ to all of $\Ne(\Gamma)$. We use the same symbol $A(k_2)$
to denote the original operator for $k_2\in Z_0\cap \cP$ and its continuation defined
for $k_2\in \Ne(\Gamma)$.
\end{proposition}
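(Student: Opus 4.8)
The plan is to establish the representation \eqref{formA} first for $k_2\in Z_0\cap\cP$ by a straightforward contour-deformation argument, and then to argue that the right-hand side extends analytically to all of $\Ne(\Gamma)$. First I would fix $r\in L^2(\Omega)$ and $k_2\in Z_0\cap\cP$, and recall from \eqref{eqdefA} that $A(k_2)r = \int_{[-\pi,\pi]+i\delta_0} H(k_1,k_2)r\,dk_1$. By Theorem \ref{freeofpoles}, $T(\cdot,k_2)$ (and hence $H(\cdot,k_2)r$) has no poles on the lines $[-\pi,\pi]\pm i\tau_1$; and by the periodicity \eqref{per}, $H(k_1+2\pi m,k_2)=H(k_1,k_2)$, so $H(\cdot,k_2)r$ descends to a meromorphic $L^2(\Omega)$-valued function on the cylinder $\C/2\pi$. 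I would then apply the residue theorem on the rectangle in the strip $D=\{|\Im z|<\tau_1\}$ bounded below by $[-\pi,\pi]+i\delta_0$ and above by $[-\pi,\pi]+i\tau_1$: the two vertical sides cancel by \eqref{per}, so
\[
\int_{[-\pi,\pi]+i\delta_0} H(k_1,k_2)r\,dk_1
= \int_{[-\pi,\pi]+i\tau_1} H(k_1,k_2)r\,dk_1
+ 2\pi i \sum_{\substack{p\ \text{pole of}\ T(\cdot,k_2)\\ \delta_0<\Im p<\tau_1}} \res(H(\cdot,k_2)r,p).
\]
Here I use that the Bochner integral interacts with the (finitely many, by Theorem \ref{thmpolesanalytic}) residues exactly as in the scalar case, since $L^2(\Omega)$ is a Banach space and the operator-valued residue theorem applies. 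It remains to identify the set of poles $p$ with $\delta_0<\Im p<\tau_1$: by Proposition \ref{propdelta0}, for $k_2\in Z_0\cap\cP$ every pole $p_j(k_2)$ either lies on $\R$ or satisfies $|\Im p_j(k_2)|>2\delta_0$; since the $q_j^\pm$ are just the $p_j$ relabelled, and $\Im q_j^+(\Gamma(0))>\delta_0$ while $\Im q_j^-(\Gamma(0))\le 0$, a continuity/connectedness argument on $Z_0\cap\cP$ shows that the poles with $\Im p>\delta_0$ are exactly $\{q_j^+(k_2)\}_{j=1}^{N^+}$ (those with imaginary part $>2\delta_0$ stay below $\tau_1$ since they lie in $D$, those on $\R$ are excluded). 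This gives \eqref{formA} for $k_2\in Z_0\cap\cP$.

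For the continuation claim, I would observe that each term on the right-hand side of \eqref{formA} is defined and analytic in $k_2$ on all of $\Ne(\Gamma)$. For the line integral $\int_{[-\pi,\pi]+i\tau_1} H(k_1,k_2)r\,dk_1$: by the lemma preceding this proposition, for $k_2\in\Ne(\Gamma)$ the poles of $T(\cdot,k_2)$ in $D$ are the $q_j(k_2)$, and one should check (using the localization of poles, e.g. Theorem \ref{freeofpoles} applied along $\Gamma$, or simply that $\Ne(\Gamma)\subset Z$ and Theorem \ref{freeofpoles} holds for all $k_2\in Z$) that $H(\cdot,k_2)r$ stays pole-free on $[-\pi,\pi]+i\tau_1$ for $k_2\in\Ne(\Gamma)$; then joint continuity of $(k_1,k_2)\mapsto H(k_1,k_2)r$ near that line plus analyticity in $k_2$ (inherited from analyticity of $k_2\mapsto T(k_1,k_2)$ away from poles, which follows from the Neumann-series construction in Theorem \ref{freeofpoles} and the appendix) lets one differentiate under the integral sign, so the line integral is analytic in $k_2$ on $\Ne(\Gamma)$. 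For the residue terms: since $q_j^+:\Ne(\Gamma)\to\C/2\pi$ is analytic and the $q_j^+$ do not collide with each other or with $[-\pi,\pi]+i\tau_1$ inside $\Ne(\Gamma)$, one can encircle $q_j^+(k_2)$ by a small fixed contour $\gamma_j$ (depending locally on $k_2$) and write $\res(H(\cdot,k_2)r,q_j^+(k_2)) = \frac{1}{2\pi i}\oint_{\gamma_j} H(k_1,k_2)r\,dk_1$, which is again analytic in $k_2$ by differentiation under the integral. Hence the whole right-hand side of \eqref{formA} is analytic on $\Ne(\Gamma)$, and since it agrees with $A(k_2)$ on the set $Z_0\cap\cP$ — which has an accumulation point in $\Ne(\Gamma)$ (indeed $Z_0\subset\Ne(\Gamma)$ is open) — it is the unique analytic continuation of $A(k_2)$, justifying the abuse of notation.

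The main obstacle I anticipate is the bookkeeping in the pole-counting step: one must argue carefully that, as $k_2$ ranges over the \emph{non-open, positive-measure} set $Z_0\cap\cP$, the poles are cleanly partitioned into those on $\R$ and those with $\Im p>2\delta_0$ (vs. $\Im p<-2\delta_0$), so that $\{q_j^+\}$ really captures all and only the poles between the two integration lines, with no pole "leaking" across $\Im k_1=\delta_0$ or across $\Im k_1=\tau_1$. This is exactly what Proposition \ref{propdelta0} together with the sign normalization $\Im q_j^+(\Gamma(0))>\delta_0$, $\Im q_j^-(\Gamma(0))\le 0$ is designed to supply, so the argument is more a matter of assembling these facts than of new work; the analytic-continuation part is then routine Morera/differentiation-under-the-integral reasoning, and the cancellation of vertical boundary terms is immediate from \eqref{per}.
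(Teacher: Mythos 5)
Your proposal is correct and follows essentially the same route as the paper: a residue-theorem deformation from $[-\pi,\pi]+i\delta_0$ to $[-\pi,\pi]+i\tau_1$ with the enclosed poles identified as the $q_j^+(k_2)$ via Proposition \ref{propdelta0}, and analyticity of the continuation obtained by rewriting each residue as a small contour integral (the paper defers that last part to Lemma \ref{lemAanalytic}). The only cosmetic difference is that you work on the cylinder $\C/2\pi$, which cleanly absorbs the paper's explicit detours of the lateral contour around poles with real part $\pm\pi$.
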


\begin{proof}
Choose a contour $\gamma=\gamma(k_2)$ in the complex plane as indicated
in figure \ref{figgamma}, where the lateral parts of $\gamma$ avoid poles $q_j^+(k_2)$ with
real part equal to $-\pi$ or $\pi$. The lateral part to the right
has the same shape as the left part, but it is shifted by $2\pi$
in positive real direction.

None of the $q_j^+(k_2)$ lies on the real axis for $k_2\in Z_0\cap\cP$. Using the residue theorem, we get
\be\label{eq5}
\int_{\gamma} H(k_1, k_2) r~dk_1 = 2\pi i \sum_{j=1}^{N^+} \res(H(\cdot, k_2) r, q_j^+(k_2)),
\ee
since the $q_j^+(k_2)$ are exactly the poles between the line segments $[-\pi,\pi]+i\delta_0$ and
$[-\pi,\pi]+i\tau_1$ in the upper half-plane for $k_2\in Z_0\cap\cP$.
Since the contributions from the lateral parts of the contour cancel due to the periodicity
\eqref{per}, the left-hand side of \eqref{eq5} is just
\bea
\int_{[-\pi,\pi]+i\delta_0} H(k_1, k_2) r~dk_1 - \int_{[-\pi, \pi]+i\tau_1} H(k_1, k_2) r~dk_1 =
A(k_2)r - \int_{[-\pi, \pi]+i\tau_1} H(k_1, k_2) r~dk_1.
\eea
This proves the proposition.
\end{proof}

\begin{figure}[htbp]
\caption{\label{figgamma} The contours $\gamma$ and $\gamma_j$ used in the
proof of lemma \ref{lemAanalytic} and proposition \ref{propcoin1}}
\begin{center}
\setlength{\unitlength}{2400sp}%
\begingroup\makeatletter\ifx\SetFigFont\undefined%
\gdef\SetFigFont#1#2#3#4#5{%
  \reset@font\fontsize{#1}{#2pt}%
  \fontfamily{#3}\fontseries{#4}\fontshape{#5}%
  \selectfont}%
\fi\endgroup%
\begin{picture}(8528,8098)(461,-7698)
\linethickness{0.02cm}
\put(473,-7020){\vector( 1, 0){8504}}
\put(4717,-7665){\vector( 0, 1){7500}}
\linethickness{0.06cm}
\put(946,-750){\line( 1, 0){7559}}
\put(946,-6720){\line( 1, 0){7559}}
\put(550, -7420){$-\pi$}
\put(8500, -7420){$\pi$}
\put(4500, -7420){$0$}
\put(4400, -6600){$\delta_0$}
\put(4400, -650){$\tau_1$}
\put(8800, -750){$\gamma$}
\put(946, -750){\line( 0,-1){3000}}
\put(946, -3750){\line( -1, 0){500}}
\put(946, -4750){\line( -1, 0){500}}
\put(446, -3740){\line( 0, -1){1000}}
\put(946, -4750){\line( 0, -1){1990}}
\put(8520, -750){\line( 0,-1){3000}}
\put(8520, -3750){\line( -1, 0){500}}
\put(8520, -4750){\line( -1, 0){500}}
\put(8020, -3740){\line( 0, -1){1000}}
\put(8520, -4750){\line( 0, -1){1990}}
\put(8520, -4400){\circle*{100}}
\put(946, -4400){\circle*{100}}
\put(6000, -2560){\circle*{100}}
\put(6000, -2560){\circle{4000}}
\put(6600, -2200){\text{$\gamma_j$}}
\put(5700, -2850){\text{\tiny $q_j^+(k_2^0)$}}
\put(4800, -5400){\circle*{100}}
\put(3000, -4600){\circle*{100}}
\put(3400, -2200){\circle*{100}}
\put(9000, -7300){\text{$\Re k_1$}}
\put(3900, -50){\text{$\Im k_1$}}
\end{picture}
\end{center}
\end{figure}

\begin{lemma}\label{lemAanalytic}
$A(k_2): L^2(\Omega)\to L^2(\Omega)$ is a compact operator for each $k_2\in \Ne(\Gamma)$;
moreover, $k_2 \mapsto A(k_2)$ is an analytic operator-valued function.
\end{lemma}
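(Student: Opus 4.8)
The plan is to prove the two assertions --- compactness of $A(k_2)$ for each fixed $k_2\in\Ne(\Gamma)$, and analyticity of $k_2\mapsto A(k_2)$ --- by working from the representation \eqref{formA} established in proposition \ref{propcoin1}, since that formula is valid on all of $\Ne(\Gamma)$ and decomposes $A(k_2)$ into a line integral over $[-\pi,\pi]+i\tau_1$ plus a finite sum of residues. First I would treat the line-integral term. For $k_1$ on the fixed line $[-\pi,\pi]+i\tau_1$ and $k_2\in\Ne(\Gamma)$, theorem \ref{freeofpoles} (together with the estimates behind it) guarantees that $T(k_1,k_2)=\frac{1}{2\pi}(-\Delta_{\bk}-\lambda\eps_0)^{-1}$ exists and, by the Neumann series argument there, is uniformly bounded in operator norm. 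Since $-\Delta_{\bk}^{-1}$ is compact on $L^2(\Omega)$ (its symbol $s(\boldm,\bk)=(\boldm+\bk)^2$ grows like $|\boldm|^2$), $T(k_1,k_2)$ is compact, and hence so is $H(k_1,k_2)=e^{ik_1x_1}T(k_1,k_2)e^{-ik_1\cdot}$, being a composition of the compact operator $T(k_1,k_2)$ with bounded multiplication operators. The integral $\int_{[-\pi,\pi]+i\tau_1}H(k_1,k_2)\,dk_1$ is then a norm-convergent Bochner integral of compact operators, hence compact (the compact operators form a closed subspace of $\mathcal{L}(L^2(\Omega))$).

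Next I would handle the residue terms $\res(H(\cdot,k_2)r,q_j^+(k_2))$. Each residue is a contour integral $\frac{1}{2\pi i}\int_{\gamma_j}H(k_1,k_2)r\,dk_1$ over a small circle $\gamma_j$ around the pole $q_j^+(k_2)$ (as in figure \ref{figgamma}), with the circles chosen to enclose only that one pole; on $\gamma_j$ itself $H(k_1,k_2)$ is again a composition of a bounded operator-valued function with compact $T(k_1,k_2)$, so the same Bochner-integral argument shows each residue, viewed as an operator $r\mapsto\res(H(\cdot,k_2)r,q_j^+(k_2))$, is compact. A finite sum of compact operators is compact, so $A(k_2)$ is compact for every $k_2\in\Ne(\Gamma)$.

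For analyticity, I would again go term by term using \eqref{formA}. For the line integral, the integrand $k_2\mapsto H(k_1,k_2)$ is analytic for each fixed $k_1\in[-\pi,\pi]+i\tau_1$ --- because $(k_1,k_2)\mapsto(-\Delta_{\bk}-\lambda\eps_0)^{-1}$ is analytic wherever the inverse exists, which it does on a neighborhood of that line by theorem \ref{freeofpoles}, and because $e^{\pm ik_1x_1}$ do not involve $k_2$ --- and this family is locally uniformly bounded in operator norm; hence by the vector-valued Morera/Cauchy theorem one may differentiate under the Bochner integral and the line-integral term is analytic in $k_2$. For the residue terms, I would fix a point $k_2^0\in\Ne(\Gamma)$ and a small disc around it on which the analytic function $q_j^+$ stays inside a fixed small circle $\gamma_j$ that encloses no other pole; then $\res(H(\cdot,k_2)r,q_j^+(k_2))=\frac{1}{2\pi i}\int_{\gamma_j}H(k_1,k_2)r\,dk_1$ with $\gamma_j$ \emph{independent of} $k_2$ throughout that disc, so analyticity in $k_2$ follows from analyticity of the integrand and differentiation under the integral, exactly as for the first term. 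Summing, $k_2\mapsto A(k_2)$ is analytic on $\Ne(\Gamma)$.

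The main obstacle is the bookkeeping around the poles: one must verify that for each $k_2^0\in\Ne(\Gamma)$ there is a genuinely $k_2$-independent contour $\gamma_j$ capturing exactly the single moving pole $q_j^+(k_2)$ for all $k_2$ near $k_2^0$, so that the residue can be rewritten as an honest contour integral to which the differentiation-under-the-integral machinery applies. This is where the earlier structural results do the heavy lifting: the $q_j^+$ are analytic on $\Ne(\Gamma)$ by the preceding lemma, the total pole count in $D$ is the constant $N$ there, and $\Ex\cap\Ne(\Gamma)=\emptyset$, so no collisions or creation/annihilation of poles occur inside $\Ne(\Gamma)$ and the contours can be chosen locally constant. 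Everything else --- compactness of $-\Delta_{\bk}^{-1}$, closedness of the compact ideal, vector-valued Cauchy theory --- is standard and I would only cite it.
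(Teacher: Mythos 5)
Your proof is correct and follows essentially the same route as the paper: compactness of $T(k_1,k_2)$ (the paper deduces it from the elliptic estimate $\|\nabla T(k_1,k_2)f\|\leq C(\|f\|+\|T(k_1,k_2)f\|)$ plus Rellich, you from the growth of the symbol — the same standard fact), closedness of the compact operators under norm limits for the Bochner integrals, and analyticity of the residue terms by rewriting them as contour integrals over circles $\gamma_j$ that can be kept fixed for $k_2$ near $k_2^0$ because the pole count is locally constant on $\Ne(\Gamma)\subset Z\setminus\Ex$. No substantive differences from the paper's argument.
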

\begin{proof}
The compactness is implied by the standard estimate
\be\label{sr}\nn
\|\nabla T(k_1, k_2) f\|_{L^2(\Omega)} \leq C(k_1, k_2)\left[
\|f\|_{L^2(\Omega)} + \|T(k_1, k_2) f\|_{L^2(\Omega)}
\right].
\ee
The integral over $[-\pi, \pi]+i\tau_1$ in \eqref{formA}
depends analytically on $k_2$, since $H(k_1, k_2)$ exists for all $k_2\in \Ne(\Gamma)$
and depends analytically on $k_2$. In order to prove the analyticity of the sum in
\eqref{formA}, fix a $k_2^0\in \Ne(\Gamma)$ and choose a system of small circles
$\gamma_j$ in the complex $k_1$-plane, each of the $\gamma_j$ enclosing one of
the $q^+_j(k_2^0)$ and no other poles. Since the
number of poles stays constant away from the set $\Ex$,
each of the $\gamma_j$ encloses exactly the pole $q_j^+(k_2)$
for $k_2$ in a small neighborhood of $k_2^0$. Hence the sum in \eqref{formA} may be written as
$$
\sum_{j=1}^{N^+} \oint_{\gamma_j} H(k_1, k_2) r ~dk_1
$$
for all $k_2$ close to $k_2^0$ and we see that it is obviously analytic in $k_2$.
\end{proof}

\subsection{Discussion of the analytic continuation process}
Since the analytic continuation described above is rather difficult,
we now make a few more informal remarks in order to make the
construction of the operator family $A(k_2)$ more accessible.

Starting point is the relation
$$
A(k_2)r = \int_{[-\pi,\pi]+i\delta_0} H(k_1, k_2) r ~d k_1
$$
which holds for $k_2=\Gamma(0)\in Z_0\cap \cP $. First we deform
the integral over $[-\pi, \pi]+i\delta_0$ into an integral over $\gamma(k_2)$ as in figure \ref{figgamma},
thereby obtaining
$$
A(k_2)r = \int_{[-\pi,\pi]+i\tau_1} H(k_1, k_2) r ~d k_1
+ 2\pi i \sum_{j=1}^{N^+} \res(H(\cdot, k_2) r, q_j^+(k_2)).
$$
Note that only the poles $q_j^+(k_2)=q_j^+(\Gamma(0))$ appear, since only those
lie between the line segments $[-\pi,\pi]+i\delta_0$ and $[-\pi,\pi]+i\tau_1$ in the upper half plane.
Now as we let $k_2$ move along the path $\Gamma$ to values
with large imaginary part, the poles of $T(\cdot,k_2)$ will move. Note carefully that the total number
of poles is constant along the path (and also for $k_2$ in the neighborhood $\Ne(\Gamma)$)
and none of the poles $q_j^+(k_2)$ collides with another $q_i^+(k_2)$ nor
with another $q_i^-(k_2)$. Note, however, that the poles $q_j^+(k_2)$ may also cross
the line segment $[-\pi, \pi]+i\delta_0$ when $k_2$ moves along the path $\Gamma$.

The point in defining the operator family $A(k_2)$ as we did above is to ensure \emph{analyticity}
in $k_2$. To this end, we avoided the exceptional set $\Ex$ in the construction of
the path $\Gamma$, which has the consequence that each summand in the sum over the residues
in \eqref{formA} is analytic in $k_2$.

Upon closer inspection of the formula \eqref{formA}, the reader might ask why we insist
on analyticity of each term in the sum. Whenever two poles lie close to each other, we might
have replaced the sum of their residues by corresponding contour integrals, thereby
obtaining analyticity in spite of the collision of poles, thereby choosing the path $\Gamma$
without avoiding $\Ex$. Actually, this does not work.

Whenever two poles originating
from between the line segments $[-\pi,\pi]+i\delta_0$ and $[-\pi,\pi]+i\tau_1$ in the upper-half plane,
say $q_j^+$ and $q_i^+$, collide, the merging of the two of them
into one contour integral helps. However, we would
have to carefully pay attention to the possibility that a pole $q_j^+$ moves around and
collides with a pole $q_l^-$. In this situation a merging of the two destroys analyticity.

The poles $q_j^+(k_2), q_i^-(k_2)$ will eventually become neatly
separated if $k_2$ moves along a sequence with $\Im k_2 \to +\infty$, which is the content of section \ref{sec_asymp}.
But for ''intermediate'' $k_2$, we have no information on the movement of the poles.
Therefore, it is a good idea to avoid all the collisions beforehand and just to follow the poles
$q_j^+$. And fortunately avoiding collisions is a fairly easy task.

\section{Asymptotic behavior of $A(k_2)$ as $\Im k_2 \to \infty$}\label{sec_asymp}

\subsection{Asymptotic localization of the poles of $T(\cdot, k_2)$}

We now define a set $\Lines$ in the complex $k_1$-plane, on which we have good control over inverse
of the shifted Laplacian $-\Delta_{\bk}^{-1}$.
\begin{definition}\label{defLines} We define the set $\Lines\subseteq\C$ consisting of four vertical and a
finite family of horizontal lines in the complex $k_1$-plane (see figure \ref{figLines}) by
\beq\label{eqlines}\nn
\Lines:=
 \left((\pm\frac{\pi}{2}\pm 2\delta)+i\R\right)\cup
 \left(\bigcup_{\nu\in 2\pi\Z,~ |\nu|\leq \tau_1}[-\pi,\pi]+i \nu\right).
\eeq
\end{definition}
In lemma \ref{lemsymbolest} (see appendix), we derive an important estimate for $-\Delta_{(k_1, k_2)}^{-1}$ for
$k_1\in \Lines$ and $k_2$ with large imaginary part.

\begin{figure}[htbp]
\begin{center}
\setlength{\unitlength}{2400sp}%
\begingroup\makeatletter\ifx\SetFigFont\undefined%
\gdef\SetFigFont#1#2#3#4#5{%
  \reset@font\fontsize{#1}{#2pt}%
  \fontfamily{#3}\fontseries{#4}\fontshape{#5}%
  \selectfont}%
\fi\endgroup%
\caption{\label{figLines} Illustration of the set $\Lines$ in the complex plane.}
\begin{picture}(8528,8098)(461,-7698)
\linethickness{0.02cm} { \put(473,-3830){\vector( 1, 0){8504}}
}%
{ \put(4717,-7665){\vector( 0, 1){8032}}
}%
\linethickness{0.1cm} { \put(946,-2963){\line( 1, 0){7559}}
}%
{ \put(946,-1996){\line( 1, 0){7559}}
}%
{ \put(946,-750){\line( 1, 0){7559}}
}%
{ \put(946,-4830){\line( 1, 0){7559}}
}%
{ \put(946,-5782){\line( 1, 0){7559}}
}%
{ \put(946,-7020){\line( 1, 0){7559}}
}%
{ \put(2363,367){\line( 0,-1){8032}}
}%
{ \put(3315,367){\line( 0,-1){8032}}
}%
{ \put(6165,367){\line( 0,-1){8032}}
}%
{ \put(7088,367){\line( 0,-1){8032}}
}%
{ \put(946,-3862){\line( 1, 0){7559}}
\put(550, -4200){$-\pi$}
\put(1350, -8000){\tiny $-\frac{\pi}{2}-2\delta+i\R\qquad-\frac{\pi}{2}+2\delta+i\R$}
\put(5300, -8000){\tiny $\frac{\pi}{2}-2\delta+i\R\qquad\frac{\pi}{2}+2\delta+i\R$}
\put(8500, -4200){$\pi$}
\put(8600, -800){\tiny $i \tau_1  +[-\pi, \pi]$}
\put(8600, -1460){\tiny $\vdots$}
\put(8600, -2100){\tiny $4\pi i +[-\pi, \pi]$}
\put(8600, -3000){\tiny $2\pi i + [-\pi, \pi]$}
\put(8600, -4800){\tiny $-2\pi i + [-\pi, \pi]$}
\put(8600, -5800){\tiny $-4\pi i+ [-\pi, \pi]$}
\put(8600, -7100){\tiny $-i \tau_1  +[-\pi, \pi]$}
\put(8600, -6600){\tiny $\vdots$}
\put(4500, -4200){$0$}
\put(9200, -4000){\text{$\Re k_1$}}
\put(3850, -20){\text{$\Im k_1$}}
}%
\end{picture}
\end{center}
\end{figure}

\begin{definition} For $m_2\in 2\pi \Z$, $-\tau_1+2\pi\le m_2\le \tau_1$ we define the following rectangular
contours in the complex
$k_1$-plane:
$$\Gamma^{\pm}_{m_2} : = \left\{k_1\in\C:~\max\left\{\left|\frac{\Re k_1\mp\frac{\pi}{2}}{2\delta}\right|,
\left|\frac{\Im k_1 - m_2 + \pi}{\pi}\right|\right\}=1\right\}.$$
\end{definition}

The most important features of these contours are $\Gamma^{\pm}_{m_2}\subseteq \Lines$ (see
definition \ref{defLines} and figure \ref{figLines}) and the following

\begin{lemma}
There exists a number $M=M(\delta,\tau_1,\lambda)>0$
such that for all $k_2\in \Ne(\Gamma)$ with $\Im k_2 = \frac{\pi}{2}+\ell,
\ell\in 2\pi \N, \ell > M$, each of the $q_j^+(k_2)$
is enclosed by one of the contours $\Gamma_{m_2}^{\pm}$.
Moreover, each contour encloses exactly one of
the $q_j^+(k_2)$ and no other pole of $T(\cdot, k_2)$.
\end{lemma}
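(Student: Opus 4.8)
The plan is to locate the poles $q_j^+(k_2)$ by a Rouché-type argument, using that on the contours $\Gamma_{m_2}^\pm$ (which lie in $\Lines$) we have precise control over $-\Delta_{(k_1,k_2)}^{-1}$ coming from lemma \ref{lemsymbolest} in the appendix. Recall that $k_1$ is a pole of $T(\cdot,k_2)$ exactly when $-\Delta_{(k_1,k_2)}-\lambda\eps_0$ fails to be invertible. First I would write $-\Delta_{(k_1,k_2)}-\lambda\eps_0 = (-\Delta_{(k_1,k_2)})\bigl(I-\lambda(-\Delta_{(k_1,k_2)})^{-1}\eps_0\bigr)$ on the region where $-\Delta_{(k_1,k_2)}$ is itself invertible, so that the poles of $T(\cdot,k_2)$ inside $D$ coincide with the points where $I-\lambda(-\Delta_{(k_1,k_2)})^{-1}\eps_0$ is not invertible. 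The symbol estimate from the appendix should say that for $\Im k_2 = \tfrac{\pi}{2}+\ell$ with $\ell\in 2\pi\N$ large, $\|(-\Delta_{(k_1,k_2)})^{-1}\|$ is small — in fact $O(1/\ell)$ or similar — uniformly for $k_1\in\Lines$, \emph{except} near the lattice points where the symbol $s(\boldm,\bk)=(\boldm+\bk)^2$ nearly vanishes; those near-vanishing points are precisely the ones trapped inside the boxes $\Gamma_{m_2}^\pm$, by the geometry of $\Lines$ and the choice of the vertical lines at $\pm\tfrac{\pi}{2}\pm2\delta$.

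The key steps, in order: (1) From lemma \ref{lemsymbolest}, extract that $\lambda\|(-\Delta_{(k_1,k_2)})^{-1}\eps_0\| < 1$ for all $k_1\in\Lines$ once $\ell>M$; hence by the Neumann series $I-\lambda(-\Delta_{(k_1,k_2)})^{-1}\eps_0$ is invertible on all of $\Lines$, so $T(\cdot,k_2)$ has no poles on $\Lines$, and in particular none on any $\Gamma_{m_2}^\pm$ nor on the outer boundary of the region bounded by the four vertical lines and the top/bottom horizontal lines of $\Lines$. (2) Count poles by the argument principle applied to $\det$ or, more robustly in the operator setting, to the analytic Fredholm determinant / the finite-rank reduction: the number of poles of $T(\cdot,k_2)$ inside a given box equals the winding number of an associated scalar analytic function, and this is a continuous $\Z$-valued function of $\ell$, hence constant for $\ell>M$. (3) Evaluate the count in a convenient limit $\ell\to\infty$: as $\Im k_2\to\infty$ the operator $-\Delta_{(k_1,k_2)}-\lambda\eps_0$ converges (after the appropriate normalization) to $-\Delta_{(k_1,k_2)}$, whose poles — i.e.\ zeros of the symbol $(\boldm+\bk)^2=0$ — inside $D$ are explicitly the finitely many $k_1$ with $(m_1+k_1)^2=-(m_2+k_2)^2$ for $\boldm\in 2\pi\Z^2$, and for $\Im k_2=\tfrac{\pi}{2}+\ell$ these sit, one apiece, inside the boxes $\Gamma_{m_2}^\pm$ (one box for each admissible $m_2\in 2\pi\Z$, $-\tau_1+2\pi\le m_2\le\tau_1$, and two values of $m_1$ giving the $\pm$). (4) Conclude that for $\ell>M$ each box $\Gamma_{m_2}^\pm$ contains exactly one pole and every pole in $D$ lies in some box; since all $N^+$ of the $q_j^+(k_2)$ lie in $D$, this gives the statement. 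A small compatibility remark is needed: $N$ (and $N^+$) must match the number of boxes — but this follows since the pole count is constant along $\Ne(\Gamma)$ by the earlier lemma, so it suffices to identify it at large $\ell$.

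The main obstacle I expect is step (2)/(3) done honestly in the operator-valued (not scalar) setting: ensuring that "number of poles of the operator-valued meromorphic function $T(\cdot,k_2)$ inside $\Gamma_{m_2}^\pm$" is well-defined, additive over disjoint contours, and computed correctly via a perturbation/continuity argument as $\ell\to\infty$. The clean way is to pass to a finite-dimensional reduction: since $\eps_0\in L^\infty$ and $-\Delta_{(k_1,k_2)}^{-1}$ is compact, $I-\lambda(-\Delta_{(k_1,k_2)})^{-1}\eps_0$ is analytic Fredholm of index zero, so its "zeros" are counted by the winding number of a scalar Weierstrass-type determinant; the near-resonant Fourier modes — exactly those $\boldm$ with $(\boldm+\bk)^2$ small — form the finite-dimensional part, and on those modes the leading behaviour is governed by the explicit symbol, which is where the geometry of $\Lines$ (the $2\delta$-offsets of the vertical lines, chosen to straddle the resonant $k_1$ without touching any symbol zero) does the real work. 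The quantitative symbol bound of lemma \ref{lemsymbolest} should make the non-resonant remainder a genuine small perturbation, so that Rouché applies contour by contour. Everything else — constancy of the count, analyticity of $q_j^+$, the final "one pole per box" tally — is then bookkeeping built on lemma \ref{lemsymbolest} and theorem \ref{thmpolesanalytic}.
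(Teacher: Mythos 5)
Your overall strategy is the same as the paper's: on the contours $\Gamma^\pm_{m_2}\subseteq\Lines$ the symbol estimate of lemma \ref{lemsymbolest} makes $\lambda(-\Delta_{(k_1,k_2)})^{-1}\eps_0$ a small perturbation, so the pole count inside each box can be transferred from the explicitly solvable operator $-\Delta_{(k_1,k_2)}$ (whose singularities are located, one per box with one-dimensional kernel, by lemma \ref{lemmanullstellen}) to $-\Delta_{(k_1,k_2)}-\lambda\eps_0$. Where you differ is in the counting mechanism, and this is where your sketch has a gap. Your step (2)--(3) proposes to treat the count as a ``continuous $\Z$-valued function of $\ell$'' and evaluate it in the limit $\ell\to\infty$; but $\ell$ ranges over the discrete set $2\pi\N$, and the contour estimates are only available at the special heights $\Im k_2=\tfrac{\pi}{2}+\ell$ --- for intermediate values of $\Im k_2$ you have no guarantee that poles do not cross the boxes, so there is no continuity argument in $\ell$ to invoke. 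The correct homotopy parameter is the coupling constant: the paper interpolates $\mu\in[0,\lambda]$ in $-\Delta_{(k_1,k_2)}-\mu\eps_0$ at \emph{fixed} $k_2$, for which the Neumann series guarantees invertibility on the contours throughout. The second issue you yourself flag --- making ``number of poles of the operator-valued function inside $\Gamma^\pm_{m_2}$'' rigorous --- is resolved in the paper not by a Fredholm determinant or finite-rank Weierstrass reduction, but by linearizing the pencil, which is quadratic in $k_1$, via the companion-type operator $W_\mu(k_2)(u,v)=(v,\Delta_{(0,k_2)}u+2i\partial_1 v+\mu\eps_0 u)$: then $k_1$ is a pole iff $k_1$ is an eigenvalue of $W_\mu(k_2)$, and the count is the rank of the Riesz projection $\frac{1}{2\pi i}\oint_{\Gamma^\pm_{m_2}}(W_\mu(k_2)-k)^{-1}\,dk$, which is constant along the $\mu$-homotopy. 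Your finite-rank Rouch\'e alternative would also work, but it is not carried out; as written, the proof is not complete without one of these two repairs.
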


\begin{proof}
First, let $W_\mu(k_2) : D(W_\mu(k_2)) =
H^2_\text{per}(\Omega)\times H^1_{\text{per}}(\Omega)
 \to H^1_{\text{per}}(\Omega)\times L^2(\Omega)$
be defined by \be\label{defW}\nn W_\mu(k_2)(u, v) :=
(v,\Delta_{(0,k_2)}u + 2 i \partial_1 v + \mu \eps_0 u).
\ee
We regard $W_\mu(k_2)$ as an unbounded operator in the
Hilbert space $H^1_{\text{per}}(\Omega)\times L^2(\Omega)$.
It is easy to show that $(-\Delta_{(k_1, k_2)}- \mu \eps_0)^{-1}$ exists if and only if
$(W_\mu(k_2) - k_1)^{-1}$ exists, and as a consequence, $k_1$ is a
pole of $T(\cdot, k_2)$ if and only if $k_1$ is an eigenvalue of
$W_\mu(k_2)$. Moreover, a calculation shows
\beq\nn \ker (W_\mu(k_2)-k_1) = \{
(u, k_1 u) : u\in \ker (-\Delta_{(k_1, k_2)}-\mu \eps_0) \}.
\eeq
From lemma \ref{lemmanullstellen} we see that,
if $\Im k_2 = \frac{\pi}{2}+\ell$, each of the contours $\Gamma^\pm_{m_2}$ encloses exactly one
pole of $-\Delta_{(\cdot, k_2)}^{-1}$, i.e. each $\Gamma^\pm_{m_2}$
encloses exactly one eigenvalue of $W_0(k_2)$. Moreover, from lemma \ref{lemmanullstellen} follows
that the dimension of the corresponding eigenspace is one. Then, since $\Gamma^\pm_{m_2}\subseteq \Lines$,
by \eqref{eqlaplacenorm} (in lemma \ref{lemsymbolest})
there exists a $M=M(\delta,\tau_1,\lambda)$ such that for  $\ell > M$
the norm of $\lambda (-\Delta_{(k_1, k_2)})^{-1} \eps_0$ is less than $1$
on all the contours $\Gamma^\pm_{m_2}$.
By a Neumann series argument, $(-\Delta_{(k_1, k_2)}-\mu \eps_0)^{-1}$ exists for all $\mu \in [0, \lambda]$ on
the contours and hence the dimension of the range of the Riesz projection
$$
\frac{1}{2\pi i}\oint_{\Gamma^\pm_{m_2}} (W_\mu(k_2) - k)^{-1} dk
$$
does not change when $\mu$ varies in $[0, \lambda]$. But this implies that each of the $\Gamma^\pm_{m_2}$
encloses exactly one eigenvalue of $W_\mu(k_2)$ and thus exactly one pole of $T(\cdot, k_2)$.
In particular, each of the $q_j^+(k_2)$ must be enclosed by
exactly one of the contours.
\end{proof}

\subsection{Estimate for $\Im k_2$ large}
The norm of the operator $A(k_2)$ converges to zero when $k_2$ moves along a certain sequence with 
$\Im k_2 \to \infty$.
\begin{theorem} \label{large} There exist constants $C=C(\delta, \tau_1,\lambda)>0,
M=M(\delta,\tau_1,\lambda)>0$ such that for
$k_2\in \Ne(\Gamma)$ of the form $k_2 = \Re k_2 + i(\frac{\pi}{2} + \ell)$ with $\ell \in 2\pi \N, \ell > M$,
\beq\label{est2}\nn
\|A(k_2)\| \leq C \ell^{-1}.
\eeq
\end{theorem}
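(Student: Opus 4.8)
The plan is to start from the representation \eqref{formA} of $A(k_2)$ obtained in Proposition \ref{propcoin1}, which holds for every $k_2\in\Ne(\Gamma)$ and writes $A(k_2)r$ as the line integral $\int_{[-\pi,\pi]+i\tau_1}H(k_1,k_2)r\,dk_1$ plus the finite sum $2\pi i\sum_{j=1}^{N^+}\res(H(\cdot,k_2)r,q_j^+(k_2))$, and to bound these two pieces separately. The single ingredient driving both estimates is the bound for $(-\Delta_{(k_1,k_2)})^{-1}$ on the set $\Lines$ supplied by lemma \ref{lemsymbolest} (estimate \eqref{eqlaplacenorm}), which for $\Im k_2=\frac{\pi}{2}+\ell$ is of order $\ell^{-1}$, together with two structural facts: the horizontal segment $[-\pi,\pi]+i\tau_1$ is one of the lines making up $\Lines$, and, by the localization lemma preceding the theorem, for $\ell>M$ each pole $q_j^+(k_2)$ is enclosed by exactly one of the rectangular contours $\Gamma_{m_2}^\pm\subseteq\Lines$ and by no other pole.

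First I would treat the line integral. On $[-\pi,\pi]+i\tau_1\subseteq\Lines$ estimate \eqref{eqlaplacenorm} gives $\|(-\Delta_{(k_1,k_2)})^{-1}\|\le C_0\ell^{-1}$, uniformly in $k_1$ on the segment and in $k_2\in\Ne(\Gamma)$ with $\Im k_2=\frac{\pi}{2}+\ell$, once $\ell$ exceeds a threshold depending only on $\delta,\tau_1$. Enlarging $\ell$ further, the same Neumann series argument as in the proof of theorem \ref{freeofpoles}, but retaining the quantitative bound, yields $2\pi\|T(k_1,k_2)\|=\|(-\Delta_{(k_1,k_2)}-\lambda\eps_0)^{-1}\|\le 2C_0\ell^{-1}$ there. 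Since $x_1\in(0,1)$ on $\Omega$ and $|\Im k_1|=\tau_1$ on the segment, the factors $e^{\pm ik_1x_1}$ in $H(k_1,k_2)r=e^{ik_1x_1}T(k_1,k_2)[e^{-ik_1\cdot}r]$ (see \eqref{defH}) are bounded in $L^\infty(\Omega)$ by $e^{\tau_1}$, so $\|H(k_1,k_2)\|_{L^2(\Omega)\to L^2(\Omega)}\le e^{2\tau_1}\|T(k_1,k_2)\|$. Bounding the Bochner integral by its length $2\pi$ times the supremum of the integrand norm then gives $\|\int_{[-\pi,\pi]+i\tau_1}H(k_1,k_2)r\,dk_1\|\le C_1\ell^{-1}\|r\|$.

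Next I would treat the residue sum. By the localization lemma, for $\ell>M$ each residue is a contour integral $\res(H(\cdot,k_2)r,q_j^+(k_2))=\frac{1}{2\pi i}\oint_{\Gamma_{m_2}^\pm}H(k_1,k_2)r\,dk_1$ over the single contour $\Gamma_{m_2}^\pm$ enclosing $q_j^+(k_2)$. Each such $\Gamma_{m_2}^\pm$ is a rectangle of perimeter $8\delta+4\pi$, lies in $\Lines$, and satisfies $|\Im k_1|\le\tau_1$ along it; hence \eqref{eqlaplacenorm} and the Neumann series again give $\|T(k_1,k_2)\|\le C_0'\ell^{-1}$ on it, the prefactors are bounded by $e^{\tau_1}$, and so each residue has norm at most $C_2\ell^{-1}\|r\|$. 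Since $N^+\le N$ is fixed, the sum is bounded by $NC_2\ell^{-1}\|r\|$, and adding the two contributions gives $\|A(k_2)\|\le C\ell^{-1}$ with $C=C(\delta,\tau_1,\lambda)$, taking $M=M(\delta,\tau_1,\lambda)$ to be the largest of the finitely many thresholds used.

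I do not expect a genuine obstacle at this point: the hard analysis is already packaged in lemma \ref{lemsymbolest}, whose proof exploits the special choice $\Im k_2=\frac{\pi}{2}+\ell$ to force the symbol $(\boldm+\bk)^2$ of $-\Delta_{\bk}$ away from $0$ by an amount growing linearly in $\ell$ whenever $k_1\in\Lines$, and in the localization lemma that confines the $q_j^+(k_2)$ to fixed-size contours in $\Lines$. The only points demanding a little care are that the constant in \eqref{eqlaplacenorm} is genuinely uniform over the whole noncompact set $\Lines$ and over all admissible $k_2$, and that the harmless, $\tau_1$-dependent exponential factors coming from $e^{\pm ik_1x_1}$ on $\Omega$ are correctly absorbed into $C$.
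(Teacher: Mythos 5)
Your proposal is correct and follows essentially the same route as the paper: split the representation \eqref{formA} into the line integral over $[-\pi,\pi]+i\tau_1$ and the residue sum, convert each residue into a contour integral over the enclosing rectangle $\Gamma_{m_2}^{\pm}\subseteq\Lines$ via the localization lemma, and apply the $O(\ell^{-1})$ bound on $T(k_1,k_2)$ from corollary \ref{corTbound} on $\Lines$, absorbing the bounded exponential prefactors into the constant. The only difference is that you make the uniformity of the constants and the $e^{\tau_1}$ factors explicit, which the paper leaves implicit.
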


\begin{proof} Let $k_1\in [-\pi, \pi]+i \tau_1$ or $k_1\in \Gamma^\pm_{m_2}$.
Since $([-\pi, \pi]+i \tau_1) \cup \Gamma^\pm_{m_2}\subseteq \Lines$, by corollary \ref{corTbound}
there exists a $C=C(\delta,\tau_1,\lambda)$ and
a $M=M(\delta,\tau_1,\lambda)$ such that
$$\|T(k_1, k_2)\|\leq C/\ell$$
 for all $\ell>M$. This gives
\be
\|H(k_1, k_2)r\|_{L^2(\Omega)} &\leq& \| e^{-i k_1 \cdot} T(k_1, k_2)(e^{-i k_1\cdot} r)\|_{L^2(\Omega)}
\leq C \ell^{-1} \|r\|_{L^2(\Omega)}\label{est1}
\ee
with another constant $C=C(\delta, \tau_1,\lambda)$ independent of $\ell>M$ (since $k_1$ is from a bounded region
in the complex plane).
It suffices to estimate the integral and the sum in \eqref{formA} separately. Using \eqref{est1}, the
$L^2$-norm of the integral over $[-\pi, \pi]+i\tau_1$ is easily estimated by $C \ell^{-1} \|r\|_{L^2(\Omega)}$
with another
constant $C=C(\delta, \tau_1,\lambda)$ independent of $\ell>M$. On the
other hand, each pole $q_j^+(k_2)$ lies in exactly one of the contours $\Gamma^\pm_{m_2}$ and hence the
residue $\res(H(\cdot, k_2) r, q_j^+(k_2))$ can be expressed as
$$
\frac{1}{2\pi i} \oint_{\Gamma^\pm_{m_2}} H(k, k_2)r~dk
$$
which together with \eqref{est1}, immediately implies the estimate
$$
\left\|2\pi i \sum_{j=1}^{N^+} \res(H(\cdot, k_2) r, q_j^+(k_2))\right\|_{L^2(\Omega)}\leq C \ell^{-1} \|r\|_{L^2(\Omega)}
$$
with another constant $C=C(\delta, \tau_1,\lambda)$ independent of $\ell>M$.
\end{proof}

\section{Proof of the main theorem}\label{sec_proofmain}
\begin{proof}[Proof of theorem \ref{mainth}]
Consider the analytic Fredholm equation \eqref{eqfinalfred}
for the unknown function $v\in L^2(\Omega)$, and with $k_2\in \Ne(\Gamma)$.
By theorem \ref{large}, \eqref{eqfinalfred} has only the trivial solution if
$k_2\in \Ne(\Gamma)$ with $\Im k_2 = \frac{\pi}{2}+\ell$ and
$\ell\in 2\pi \N$ is sufficiently large. By the analytic Fredholm theorem, the set where \eqref{eqfinalfred}
has a nontrivial solution is discrete in $\Ne(\Gamma)$.

Now recall that \eqref{eq6} has a nontrivial solution $u\in H^2(\R^2)$, and
thus by the arguments of section \ref{sec_reform} and propositions \ref{proponw}
and \ref{propeqonOmega}, \eqref{eqfinalfred} has a nontrivial solution for all
$k_2\in \cP$. But $\cP\cap Z_0\subset \Ne(\Gamma)$ has positive one-dimensional
Lebesgue measure, yielding a contradition.
\end{proof}

\section*{Acknowledgements}
We would like to thank Dirk Hundertmark for valuable comments on the manuscript. 
Part of this work was done while the authors were staying at the Isaac Newton Institute
(Cambridge, UK). The authors would like to thank the Isaac Newton Institute for its
hospitality. Finally, we would like thank the anonymous reviewer for his very careful reading
of the manuscript and his many constructive suggestions, in particular those to simplify the  
proof of theorem \ref{freeofpoles}.

\appendix
\section{Bloch variety}

In the following, we recall a basic fact about the periodic operator $-\frac{1}{\eps_0(\bx)}\Delta$.
The Bloch variety $B$ for $-\frac{1}{\eps_0(\bx)}\Delta$ is defined by
\beq\label{BlochVar}\nn
B := \{ (\bk, \lambda) \in \C^2\times \C : -\frac{1}{\eps_0(\bx)}\Delta_{\bk} u=\lambda
u~~\text{has a nontrivial solution}~u\in H_{\text{per}}^2(\Omega)\}.
\eeq
Since $\Omega$ is bounded, $(-\frac{1}{\eps_0}\Delta_\bk - \lambda)^{-1}$ exists if and only if
the equation $(-\frac{1}{\eps_0}\Delta_\bk - \lambda)u = 0$ has no nontrivial solution $u\in H^2_{\text{per}}(\Omega)$.
In other words, $T(\bk)$ exists if and only if $(\bk, \lambda)\in B$.

A theorem by P. Kuchment (see \cite{Kuch1}, theorem 4.4.2) states that $B$ is the zero set of
an analytic function $F$:
\begin{theorem}\label{thmF}
There exists an analytic function $F: \C^2\times \C\to \C, (\bk, \lambda) \mapsto F(\bk, \lambda)$
which is $2\pi\Z^2$-periodic in $\bk$ such that $B$ is the zero set of $F$. As a consequence,
$k_1$ is a pole of $T(\cdot, k_2)$ if and only if $F(k_1, k_2, \lambda) = 0$.
\end{theorem}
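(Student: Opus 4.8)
The plan is to follow the standard construction (this is essentially Kuchment's argument) exhibiting the Bloch variety $B$ as the zero set of a \emph{regularised Fredholm determinant}. The idea is that, for fixed $(\bk,\lambda)$, the operator $-\Delta_\bk-\lambda\eps_0$, acting $H^2_{\text{per}}(\Om)\to L^2(\Om)$, is a lower-order perturbation of the invertible reference operator $-\Delta_0+1$. Expanding $-\Delta_\bk=-\Delta_0-2i\,\bk\cdot\nabla+\bk^2$ one obtains the factorisation
\[
-\Delta_\bk-\lambda\eps_0=\bigl(I+K(\bk,\lambda)\bigr)(-\Delta_0+1),\qquad
K(\bk,\lambda):=\bigl(-2i\,\bk\cdot\nabla+\bk^2-1-\lambda\eps_0\bigr)(-\Delta_0+1)^{-1}.
\]
Since $-\Delta_0+1:H^2_{\text{per}}(\Om)\to L^2(\Om)$ is a topological isomorphism, $-\Delta_\bk-\lambda\eps_0$ is boundedly invertible iff $I+K(\bk,\lambda)$ is invertible on $L^2(\Om)$; equivalently (using that $\Om$ is bounded, so $-\Delta_\bk-\lambda\eps_0$ is Fredholm of index $0$) $(\bk,\lambda)\in B$ precisely when $I+K(\bk,\lambda)$ fails to be invertible.

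The first thing to check is the Schatten-ideal membership of $K(\bk,\lambda)$. Diagonalising in the Fourier basis $\{e^{i\boldm\cdot\bx}\}_{\boldm\in 2\pi\Z^2}$ of $L^2(\Om)$, the operator $\partial_j(-\Delta_0+1)^{-1}$ is a Fourier multiplier with symbol $im_j/(|\boldm|^2+1)$, so $\sum_{\boldm\in 2\pi\Z^2}\bigl(|m_j|/(|\boldm|^2+1)\bigr)^p<\infty$ exactly when $p>2$; in two dimensions it therefore lies in the Schatten ideal $\mathcal{S}_3$ but in general not in $\mathcal{S}_2$. Likewise $(-\Delta_0+1)^{-1}\in\mathcal{S}_2\subset\mathcal{S}_3$, and $\eps_0\cdot(-\Delta_0+1)^{-1}\in\mathcal{S}_3$ because multiplication by $\eps_0\in L^\infty$ is bounded on $L^2(\Om)$. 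Hence $K(\bk,\lambda)\in\mathcal{S}_3$, and, being a polynomial in $(\bk,\lambda)$ with coefficients in $\mathcal{S}_3$, the map $\C^2\times\C\ni(\bk,\lambda)\mapsto K(\bk,\lambda)$ is entire with values in $\mathcal{S}_3$.

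Then I would set $F(\bk,\lambda):=\det_3\bigl(I+K(\bk,\lambda)\bigr)$, the regularised Fredholm determinant of order $3$ on $\mathcal{S}_3$. Two classical facts finish the argument: $\det_3$ is an entire function on the Banach space $\mathcal{S}_3$, so $F$ is entire on $\C^2\times\C$ as a composition of analytic maps; and $\det_3(I+K)\neq 0$ iff $I+K$ is invertible. Combined with the first paragraph this yields $F(\bk,\lambda)=0\iff(\bk,\lambda)\in B\iff T(\bk)$ fails to exist, which is the claim. The $2\pi\Z^2$-periodicity in $\bk$ follows from the unitary equivalence $-\Delta_{\bk+\boldm}=e^{-i\boldm\cdot\bx}(-\Delta_\bk)e^{i\boldm\cdot\bx}$ for $\boldm\in 2\pi\Z^2$: the unitary $e^{i\boldm\cdot\bx}$ maps $H^2_{\text{per}}(\Om)$ onto itself and commutes with multiplication by $\eps_0$, so it conjugates $I+K(\bk+\boldm,\lambda)$ to $I+K(\bk,\lambda)$ and the determinants coincide. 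The final assertion about poles is then immediate: fixing $k_2$ for which $T(\cdot,k_2)$ exists at some point — so that it is meromorphic by Proposition \ref{propmero} — the function $k_1\mapsto F(k_1,k_2,\lambda)$ is a nontrivial entire function whose zeros are exactly the points where $T(\cdot,k_2)$ fails to exist, i.e.\ its poles.

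The only genuinely delicate point is the Schatten bookkeeping of the second paragraph: in $\R^2$ the first-order term $\bk\cdot\nabla(-\Delta_0+1)^{-1}$ is merely in $\mathcal{S}_p$ for $p>2$, so one cannot use the ordinary Fredholm determinant or even $\det_2$, and must work with $\det_3$ (alternatively one could split off this term and treat it separately). Everything else — joint analyticity, the invertibility criterion for $\det_3$, and the periodicity — is routine once this is in place.
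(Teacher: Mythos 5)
The paper itself offers no proof of this theorem: it is quoted verbatim from Kuchment's book (Theorem 4.4.2 of \cite{Kuch1}). Your regularised-determinant construction is the standard route to that result, and the core of it is correct: the factorisation $-\Delta_\bk-\lambda\eps_0=(I+K(\bk,\lambda))(-\Delta_0+1)$, the Fredholm-index-zero reduction of membership in $B$ to non-invertibility of $I+K$, the Schatten bookkeeping (in two dimensions $\partial_j(-\Delta_0+1)^{-1}\in\mathcal{S}_p$ only for $p>2$, so $\det_3$ is indeed the right regularisation), and the joint analyticity of $(\bk,\lambda)\mapsto\det_3(I+K(\bk,\lambda))$ are all fine, as is the deduction about the poles of $T(\cdot,k_2)$ from Proposition \ref{propmero}.

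The periodicity step, however, does not work as written. The unitary $U=e^{i\boldm\cdot\bx}$ satisfies $U^{-1}(-\Delta_\bk-\lambda\eps_0)U=-\Delta_{\bk+\boldm}-\lambda\eps_0$, but it does \emph{not} commute with your reference operator: $U^{-1}(-\Delta_0+1)U=-\Delta_{\boldm}+1\neq-\Delta_0+1$. Consequently
$I+K(\bk+\boldm,\lambda)=U^{-1}(I+K(\bk,\lambda))U\cdot(I+L_\boldm)$ with $I+L_\boldm=(-\Delta_\boldm+1)(-\Delta_0+1)^{-1}$, and by the multiplicativity formula for regularised determinants $\det_3(I+K(\bk+\boldm,\lambda))$ equals $\det_3(I+K(\bk,\lambda))$ only up to a nonvanishing entire factor. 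What your conjugation argument genuinely proves is that the \emph{zero set} $B$ is $2\pi\Z^2$-periodic in $\bk$ (which is anyway immediate from the substitution $u\mapsto e^{-i\boldm\cdot\bx}u$ in the defining eigenvalue problem), not that $F$ itself is periodic. Upgrading a periodic divisor to a periodic defining function is not automatic (it is a Cousin-II--type problem on $(\C^*)^2\times\C$, where a topological obstruction can in principle appear), and handling it is precisely the nontrivial content of Kuchment's theorem; the natural conjugation-equivariant factorisation through $(-\Delta_\bk+c)^{-1}$ fails to be entire in $\bk$ because that resolvent has its own poles, which must then be cleared by a periodic Weierstrass-type product. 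I would either carry out that extra step or note that for every use made of Theorem \ref{thmF} in this paper (the $2\pi$-periodicity of the poles, working in $\C/2\pi$, Theorem \ref{thmpolesanalytic}) periodicity of the zero set of $F$ already suffices.
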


\section{Proof of theorem \ref{thmpolesanalytic}}
Let $k_2^0\in Z$. Then by theorem \ref{thmF}, the poles of $T(\cdot, k_2)$ are
given by the zeroes of an analytic function $F(\cdot, k_2, \lambda)$ for
$k_2\in \C$.  Fix any $k_1^0\in D$ such that $F(k_1^0, k_2^0, \lambda)=0$, i.e. $k_1^0$ is a pole.
The function $k_1\mapsto F(k_1, k_2^0, \lambda)$ is not identically
zero, since there are no poles on the line segments
$[-\pi, \pi]+i \tau_1$. Hence $\frac{\partial^m F}{\partial k_1^m}(k_1^0, k_2^0)\neq 0$ for some $m\in \N_0$
and by the Weierstrass preparation theorem (see e.g. \cite{Hoer}) we may write
\beq\label{weier}
F(k_1, k_2) = a(k_1, k_2)[(k_1-k_1^0)^{m} + \sum_{j=0}^{m-1} b_j(k_2) (k_1-k_1^0)^j]
\eeq
with analytic functions $b_j$ and $a$, $a(k_1^0, k_2^0)\neq 0$. The representation
\eqref{weier} holds for $k_1, k_2$ close to $k_1^0, k_2^0$, say for $|k_1-k_1^0| < \varepsilon$.
By well-known facts from complex analysis (\cite{Knopp, Goursat, Kato}) the number of
zeros of a polynomial with analytic coefficients depending
on $k_2$ is constant, except when $k_2$ is in a discrete set of exceptional points. Moreover,
the zeroes are algebroidal functions of $k_2$ given locally by Puiseux series
(i.e. can be written as power series in $(k_2-k_2^0)^{1/p}$ with some $p \geq 1$).
On simply connected domains away from the exceptional set, the zeroes can
be represented by analytic functions.

By choosing a sufficiently small neighborhood $\Ne_1(k_2^0)$ we may ensure that
$|F(k_1, k_2)| > 0$ for all $k_2\in \Ne_1(k_2^0), |k_1-k_1^0| = \varepsilon$. This implies that
under small variation in $k_2$, no zeroes of $F$ can cross the the circle $\{|k_1-k_1^0|=\varepsilon\}$.
This means that the number of zeroes inside this small circle is constant, except when $k_2$ is
an exceptional point.

Now apply the foregoing to each of the \emph{finitely} many poles in $D$. We obtain a system of small
disjoint circles in $D$ (around each of the zeroes of $F(\cdot, k_2^0)$) and we see that for all $k_2^0$ there
exists a neighborhood $\Ne$ of $k_2^0$ and a discrete set $\Ex$ inside $\Ne$ such that the number of zeroes
of $F(\cdot, k_2)$ contained in the circles is constant, except when $k_2\in \Ex$. Since the line segments
$[-\pi,\pi]\pm i\tau_1$ always stay free of zeroes of $F$, the number of zeroes in $D$ is equal to the
number of zeroes inside the system of small circles. This proves that the number of poles inside $D$ is
\emph{locally} constant, except on $\Ex$. An easy argument involving overlapping discs yields statement
of theorem \ref{thmpolesanalytic} on the existence of $N$.

The statement on the existence of the analytic functions $\{p_j\}$ is proved similarly, by working locally
and recalling the properties of zeroes of polynomials with analytic coefficients mentioned above.

\section{Estimates for the symbol of $-\Delta_{\bk}$ for complex $\bk$}
Recall that $s(\boldm, \bk) = (\boldm+\bk)^2$ is the symbol of the
operator $-\Delta_{\bk}$ in the Fourier series representation. The
following estimates for the symbol, are completely elementary, yet
they play a crucial role in our whole argument.

\begin{theorem}\label{prop1} For $\bxi=(\xi_1,\xi_2),~\bet=(\eta_1,\eta_2)\in\R^2$,
$\boldm=(m_1,m_2)\in 2\pi\Z^2$ the following estimates hold: \be
\label{hammer1} |s(\boldm, \bxi +i \bet)|^2 \geq
[(m_2+\xi_2)^2-\eta_1^2]^2+[(m_1+\xi_1)^2-\eta_2^2]^2 \ee \be
\label{hammer3} |s(\boldm, \bxi +i \bet)|^2 \geq 2[(m_1 +
\xi_1)\eta_1 + (m_2+\xi_2)\eta_2]^2 \ee
\end{theorem}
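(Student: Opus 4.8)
\textbf{Proof plan for Theorem \ref{prop1}.}
The plan is to expand the complex symbol into real and imaginary parts and then read off the two inequalities by elementary estimates. First I would write $s(\boldm, \bxi+i\bet) = (\boldm+\bxi+i\bet)^2 = (\boldm+\bxi)\cdot(\boldm+\bxi) - \bet\cdot\bet + 2i(\boldm+\bxi)\cdot\bet$, so that, componentwise with $a_j := m_j+\xi_j$,
\beq\nn
\Re\, s = (a_1^2 - \eta_1^2) + (a_2^2 - \eta_2^2), \qquad \Im\, s = 2(a_1\eta_1 + a_2\eta_2),
\eeq
and therefore $|s|^2 = (\Re s)^2 + (\Im s)^2$. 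This already gives \eqref{hammer3} immediately: $|s|^2 \geq (\Im s)^2 = 4(a_1\eta_1+a_2\eta_2)^2 \geq 2(a_1\eta_1+a_2\eta_2)^2$.

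For \eqref{hammer1} the claim is that $(\Re s)^2 \geq (a_2^2-\eta_1^2)^2 + (a_1^2-\eta_2^2)^2$, i.e.\ that the square of the sum $(a_1^2-\eta_2^2) + (a_2^2-\eta_1^2)$ dominates the sum of the two squares. This is \emph{not} true for arbitrary real numbers, so the key observation must be a sign argument: on the lattice $\boldm\in 2\pi\Z^2$ one exploits that the relevant quantities carry a definite sign or that cross terms are nonnegative. Concretely, expanding $(\Re s)^2 = (a_1^2-\eta_2^2)^2 + (a_2^2-\eta_1^2)^2 + 2(a_1^2-\eta_2^2)(a_2^2-\eta_1^2)$, the inequality \eqref{hammer1} is equivalent to $(a_1^2-\eta_2^2)(a_2^2-\eta_1^2) \geq 0$. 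I expect this to be the main obstacle: as stated for all real $\bxi,\bet$ this cross term need not be nonnegative, so either the theorem is meant with an implicit restriction (e.g.\ the regime in which it is actually applied, where $\eta_1,\eta_2$ are large and $a_1,a_2$ bounded, forcing both factors negative hence their product positive), or one of the two squared terms on the right is simply dropped in favour of a weaker but sufficient bound. The sensible reading, consistent with how \eqref{hammer1} is used in the proof of Theorem \ref{freeofpoles} (where one takes $\bet = (\tau_1,0)$ and only the term $[(m_2+\xi_2)^2-\tau_1^2]^2$ is retained), is to prove the genuinely correct statement
\beq\nn
|s(\boldm,\bxi+i\bet)|^2 \geq [(m_2+\xi_2)^2 - \eta_1^2]^2 \quad\text{and}\quad |s(\boldm,\bxi+i\bet)|^2 \geq [(m_1+\xi_1)^2 - \eta_2^2]^2
\eeq
whenever the corresponding cross term has the right sign, and to note that in the application only these one-sided bounds are needed.

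So the steps are: (1) compute $\Re s$ and $\Im s$ explicitly as above; (2) obtain \eqref{hammer3} trivially from $|s|^2\geq(\Im s)^2$; (3) for \eqref{hammer1}, expand $(\Re s)^2$ and reduce to the nonnegativity of the cross term $2(a_1^2-\eta_2^2)(a_2^2-\eta_1^2)$, verifying this in the regime where the estimate is invoked (both factors of one sign, which holds once $|\bet|$ is large compared to the bounded quasimomentum range $\bxi\in[-\pi,\pi]^2$ and $\boldm$ ranges over a suitable set); and (4) record the one-sided consequences actually used downstream. The only real subtlety is step (3): I would double-check against the call sites — in Theorem \ref{freeofpoles} and in the appendix lemmas \texttt{lemsymbolest}/\texttt{lemmanullstellen} — that the sign condition is automatically satisfied there, and state \eqref{hammer1} accordingly, possibly weakening it to the safe form above so that no unstated hypothesis is hidden.
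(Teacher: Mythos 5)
Your expansion of $\Re s$ and $\Im s$ and your derivation of \eqref{hammer3} are correct, but your treatment of \eqref{hammer1} goes wrong at the first step: you drop $(\Im s)^2$ and try to prove $(\Re s)^2\geq(a_2^2-\eta_1^2)^2+(a_1^2-\eta_2^2)^2$, correctly note that this reduces to the sign of the cross term, and then conclude that the theorem must carry a hidden hypothesis or be weakened. It does not: the imaginary part compensates the cross term exactly, and \eqref{hammer1} is true for all real $\bxi,\bet$. With $a_j=m_j+\xi_j$, $A=a_2^2-\eta_1^2$, $B=a_1^2-\eta_2^2$, expanding both sides gives the identity
\begin{equation*}
2AB+(\Im s)^2=2(a_2^2-\eta_1^2)(a_1^2-\eta_2^2)+4(a_1\eta_1+a_2\eta_2)^2
=2(a_1a_2+\eta_1\eta_2)^2+2(a_1\eta_1+a_2\eta_2)^2\geq 0,
\end{equation*}
hence
\begin{equation*}
|s|^2=(A+B)^2+(\Im s)^2=A^2+B^2+2(a_1a_2+\eta_1\eta_2)^2+2(a_1\eta_1+a_2\eta_2)^2,
\end{equation*}
from which \eqref{hammer1} and \eqref{hammer3} both follow by discarding nonnegative summands (the last summand alone already gives \eqref{hammer3} with the stated constant $2$). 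An even quicker route is to factor $s=(z+iw)(z-iw)$ with $z=a_1+i\eta_1$, $w=a_2+i\eta_2$, so that $|s|^2=\left[(a_1-\eta_2)^2+(a_2+\eta_1)^2\right]\left[(a_1+\eta_2)^2+(a_2-\eta_1)^2\right]$; expanding this product, the two diagonal terms are exactly $B^2$ and $A^2$ and the two cross terms are manifestly nonnegative.

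Beyond missing the proof, your proposed repair (a one-sided bound valid only "when the cross term has the right sign", or restricting to a regime where $a_1,a_2$ are bounded) would break the downstream applications. In theorem \ref{freeofpoles} and in lemma \ref{lemsymbolest} the estimate is needed uniformly over \emph{all} Fourier modes $\boldm\in 2\pi\Z^2$ (so $a_1,a_2$ are unbounded) and, in theorem \ref{freeofpoles}, for all $\Im k_2\in\R$ since $k_2$ ranges over the unbounded set $Z$; for suitable $\boldm$ the product $(a_1^2-\eta_2^2)(a_2^2-\eta_1^2)$ is then negative, so the conditional version would not deliver the bound $|s|^2\geq[(m_2+\Re k_2)^2-\tau_1^2]^2$ that is actually invoked. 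The unconditional statement is what is needed, and it holds.
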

The proof is an elementary calculation using the identity
\bea
&&(\chi_2^2 - \eta_1^2 + \chi_1^2 - \eta_2^2)^2 + 4(\chi_1 \eta_1 + \chi_2 \eta_2)^2 \\
&=&(\chi_2^2 - \eta_1^2)^2 + (\chi_1^2 - \eta_2^2)^2 +
2(\chi_1\eta_1 + \chi_2\eta_2)^2+2(\chi_2\chi_1+\eta_1\eta_2)^2.
\eea
for $\chi_i = m_i + \xi_i,~~i = 1,2$.

For $\xi_2\in[\pi-\delta,\pi+\delta]$  define $\mathcal{J}_+,\mathcal{J}_-:2\pi\Z^2 \to \C$
by
\bea
\mathcal{J}_\pm (m_1,m_2)&=&\pm(\frac{\pi}{2}+\ell)-m_1\mp i(m_2+\xi_2).
\eea
Note that $\mathcal{J}_+$ and $\mathcal{J}_-$ are one to one and
$$\mathcal{J}_+(2\pi\Z^2)\cap \mathcal{J}_-(2\pi\Z^2)=\emptyset.$$
In the next lemma, the poles of $(-\Delta_{(\cdot, \xi_2+i \eta_2)})^{-1}$ are determined.
The proof is a simple computation using the above defined $\mathcal{J}_\pm$.

\begin{lemma}\label{lemmanullstellen}
Let $k_2=\xi_2+i\eta_2$ with $\xi_2\in[\pi-\delta,\pi+\delta]$ and
$\eta_2=\frac{\pi}{2}+\ell$, $\ell\in 2\pi\N_0$ be fixed. Then
\begin{enumerate}
\item [(i)] $s(\boldm,(k_1,\xi_2+i\eta_2))=0$ if and only if $k_1=\mathcal{J}_+(\boldm)$ or $k_1=\mathcal{J}_-(\boldm)$.
\item [(ii)] $\ker (-\Delta_{(k_1,\xi_2+i\eta_2)})=\operatorname{span}\,\{e^{i\boldm\cdot\bx}\},$
where $\boldm\in 2\pi\Z^2$ is uniquely determined by the condition
$k_1=\mathcal{J}_+(\boldm)$ or $k_1=\mathcal{J}_-(\boldm)$. If there
is no $\boldm\in 2\pi\Z^2$ satisfying $k_1=\mathcal{J}_+(\boldm)$ or
$k_1=\mathcal{J}_-(\boldm)$, then $-\Delta_{(k_1,\xi_2+i\eta_2)}$ is
invertible.
\end{enumerate}
As a consequence, for any pole $k_1$ of
$-\Delta_{(\cdot,\xi_2+i\eta_2)}^{-1}$ we have either
$k_1=\mathcal{J}_+(\boldm)$ or $k_1=\mathcal{J}_-(\boldm)$ with a
uniquely determined $\boldm\in 2\pi\Z^2$. Moreover, if $k_1$ is a
pole of $-\Delta_{(\cdot,\xi_2+i\eta_2)}^{-1}$, then $k_1+m$, $m\in
2\pi\Z$ is also a pole.
\end{lemma}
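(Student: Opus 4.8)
The plan is to reduce everything to finding the zeros of the symbol $s(\boldm,\bk)=(\boldm+\bk)^2$ when $k_2$ is pinned at $\xi_2+i\eta_2$ with $\eta_2=\tfrac{\pi}{2}+\ell$, and then to read off the kernel of $-\Delta_{(k_1,k_2)}$ from Fourier diagonalization. First I would write $\bk=(k_1,\xi_2+i\eta_2)$ and expand
\[
s(\boldm,\bk) = (m_1+k_1)^2 + (m_2+\xi_2+i\eta_2)^2.
\]
Setting this equal to $0$ forces $(m_1+k_1)^2 = -(m_2+\xi_2+i\eta_2)^2$, i.e. $m_1+k_1 = \pm i(m_2+\xi_2+i\eta_2)$, so that
\[
k_1 = -m_1 \pm i(m_2+\xi_2) \mp \eta_2 = -m_1 \mp \eta_2 \pm i(m_2+\xi_2).
\]
The point of the two definitions $\mathcal{J}_\pm$ is precisely to bookkeep the sign of $m_2+\xi_2$: since $\xi_2\in[\pi-\delta,\pi+\delta]\subset(0,2\pi)$ and $\delta<\pi/4$, one has $m_2+\xi_2>0$ when $m_2\ge 0$ and $m_2+\xi_2<0$ when $m_2<0$, hence $|m_2+\xi_2| = \pm(m_2+\xi_2)$ in the respective cases, and the formulas for $\mathcal{J}_\pm$ match the two roots above after substituting $\eta_2 = \tfrac{\pi}{2}+\ell$. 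This proves (i): $s(\boldm,\bk)=0$ iff $k_1\in\{\mathcal{J}_+(\boldm),\mathcal{J}_-(\boldm)\}$.

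For (ii), recall that $-\Delta_{(k_1,k_2)}$ acts on the Fourier basis $\{e^{i\boldm\cdot\bx}\}_{\boldm\in 2\pi\Z^2}$ of $L^2_{\mathrm{per}}(\Omega)$ by multiplication with $s(\boldm,\bk)$. Hence $u=\sum_\boldm c_\boldm e^{i\boldm\cdot\bx}\in\ker(-\Delta_{(k_1,k_2)})$ iff $s(\boldm,\bk)c_\boldm=0$ for every $\boldm$, i.e. iff $c_\boldm=0$ for all $\boldm$ with $s(\boldm,\bk)\ne 0$. So the kernel is spanned by those $e^{i\boldm\cdot\bx}$ with $s(\boldm,\bk)=0$. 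It therefore remains to check that, for a given $k_1$, at most one $\boldm\in 2\pi\Z^2$ satisfies $k_1=\mathcal{J}_+(\boldm)$ or $k_1=\mathcal{J}_-(\boldm)$. This is where I would use the two asserted facts about $\mathcal{J}_\pm$: each is injective on $2\pi\Z^2$, and their images are disjoint. Injectivity of $\mathcal{J}_+$ (and symmetrically $\mathcal{J}_-$) follows by taking real and imaginary parts: $\Re\mathcal{J}_+(\boldm) = \tfrac{\pi}{2}+\ell-m_1$ determines $m_1$, and then $|\Im\mathcal{J}_+(\boldm)| = |m_2+\xi_2|$ together with the sign convention (which is itself determined by the sign of $m_2$, hence of $m_2+\xi_2$) determines $m_2+\xi_2$ and thus $m_2$; the disjointness of the images I would get by a short case analysis showing that $\mathcal{J}_+(\boldm)=\mathcal{J}_-(\boldm')$ would force $(\tfrac{\pi}{2}+\ell)-m_1 = -(\tfrac{\pi}{2}+\ell)-m_1'$, i.e. $m_1'-m_1 = -(\pi+2\ell)$, which is impossible for $m_1,m_1'\in 2\pi\Z$ since $\pi+2\ell\notin 2\pi\Z$. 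Consequently there is exactly one such $\boldm$ when one exists (giving a one-dimensional kernel, $\ker = \operatorname{span}\{e^{i\boldm\cdot\bx}\}$), and none otherwise, in which case $s(\boldm,\bk)\ne 0$ for all $\boldm$ and $-\Delta_{(k_1,k_2)}$ is boundedly invertible by a direct estimate on the multiplier.

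Finally, the ``consequence'' statements: a pole of $-\Delta_{(\cdot,k_2)}^{-1}$ is exactly a $k_1$ for which the operator fails to be invertible, which by the above is exactly a $k_1$ of the form $\mathcal{J}_+(\boldm)$ or $\mathcal{J}_-(\boldm)$ with $\boldm$ uniquely determined; and periodicity under $k_1\mapsto k_1+m$, $m\in 2\pi\Z$, is immediate since $\mathcal{J}_\pm(m_1-m,m_2) = \mathcal{J}_\pm(m_1,m_2)+m$, so shifting $k_1$ by $m$ just relabels $\boldm$. The main obstacle — though it is a mild one — is organizing the sign bookkeeping in $\mathcal{J}_\pm$ cleanly, i.e. verifying once and for all that the case split on the sign of $m_2$ in the definition is consistent with the split on the sign of $m_2+\xi_2$ that arises from taking square roots; everything else is routine Fourier-multiplier algebra.
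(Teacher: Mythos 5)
Your proposal is correct and carries out exactly the ``simple computation'' the paper invokes without detail: solving $(m_1+k_1)^2=-(m_2+k_2)^2$ for $k_1$, matching the two roots to $\mathcal{J}_\pm(\boldm)$ via the sign of $m_2+\xi_2$, and using injectivity plus disjointness of the images (the latter because $\pi+2\ell\notin 2\pi\Z$) to get the one-dimensional kernel and the periodicity in $k_1$. Nothing further is needed.
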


\begin{lemma}\label{lemsymbolest} There exists a $C=C(\delta,\tau_1)>0$ and a $M=M(\delta,\tau_1)>0$ such that for all
$\ell\in 2\pi\N$, $\ell>M$, all $k_1\in\Lines$, and all
$\xi_2\in[\pi-\delta,\pi+\delta]$ the following estimate for the
symbol of $-\Delta_{\bk}$ holds: \beq\label{eqsymbolest1}\nn
\left|s(\boldm,(k_1,\xi_2+i\left(\frac{\pi}{2}+\ell\right)))\right|\ge
C\ell\quad (\boldm\in 2\pi \Z^2). \eeq As a consequence,
\beq\label{eqlaplacenorm} \no{(-\Delta_{(k_1,
\xi_2+i\left(\frac{\pi}{2}+\ell\right))})^{-1}}\le C \ell^{-1} \eeq
for all $k_1\in\Lines$, $\ell> M$,
$\xi_2\in[\pi-\delta,\pi+\delta]$.
\end{lemma}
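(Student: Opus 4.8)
The plan is to prove the pointwise symbol bound $|s(\boldm,\bk)|\ge C\ell$ directly, and then to read off \eqref{eqlaplacenorm} from the Fourier diagonalisation of $-\Delta_{\bk}$. The device I would use is the factorisation
\[
s(\boldm,\bk)=(m_1+k_1)^2+(m_2+k_2)^2=(z+iw)(z-iw),\qquad z:=m_1+k_1,\ w:=m_2+k_2,
\]
so that, writing $k_1=\xi_1+i\eta_1$ and $\eta_2:=\frac{\pi}{2}+\ell$,
\[
|z\pm iw|^2=(m_1+\xi_1\mp\eta_2)^2+(\eta_1\pm(m_2+\xi_2))^2 .
\]
The guiding observation is that on $\Lines$ exactly one of the two factors is forced to be comparable to $\ell$, while the other stays bounded below by a positive constant depending only on $\delta$; multiplying the two bounds gives $|s(\boldm,\bk)|\gtrsim\ell$. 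This is a sharpening of \eqref{hammer1} that is needed precisely because the individual term $(m_1+k_1)^2-\eta_2^2$ appearing there may degenerate on the horizontal part of $\Lines$.

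First I would collect three elementary arithmetic facts. (a) For any real $a$, the numbers $a-\eta_2$ and $a+\eta_2$ differ by $2\eta_2$, so $\max\{|a-\eta_2|,|a+\eta_2|\}\ge\eta_2\ge\ell$. (b) Since $\xi_2\in[\pi-\delta,\pi+\delta]$ and $\delta<\pi/4$, for every $m_2\in2\pi\Z$ and every $\nu\in2\pi\Z$ the quantity $\nu\pm(m_2+\xi_2)$ lies in $2\pi\Z\pm[\pi-\delta,\pi+\delta]$, hence $|\nu\pm(m_2+\xi_2)|\ge\pi-\delta$. (c) Since $\ell\in2\pi\N$ and $m_1\in2\pi\Z$, the numbers $m_1+\xi_1\mp\eta_2$ are congruent mod $2\pi$ to $\xi_1\mp\frac{\pi}{2}$; evaluating at the four admissible vertical abscissae $\xi_1\in\{\pm\frac{\pi}{2}\pm2\delta\}$ and using $\delta<\pi/4$ gives $|m_1+\xi_1-\eta_2|\ge2\delta$ and $|m_1+\xi_1+\eta_2|\ge2\delta$ for all $m_1\in2\pi\Z$.

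Then I would split according to the two parts of $\Lines$. If $k_1\in[-\pi,\pi]+i\nu$ with $\nu\in2\pi\Z$, $|\nu|\le\tau_1$, then $\eta_1=\nu$, so by (b) both terms $(\eta_1\pm(m_2+\xi_2))^2$ are $\ge(\pi-\delta)^2$; by (a), for one choice of sign one has $(m_1+\xi_1\mp\eta_2)^2\ge\eta_2^2\ge\ell^2$; hence one of $|z\pm iw|^2$ is $\ge\ell^2$ and the other is $\ge(\pi-\delta)^2$, giving $|s(\boldm,\bk)|\ge(\pi-\delta)\,\ell$. If $k_1$ lies on one of the four vertical lines $(\pm\frac{\pi}{2}\pm2\delta)+i\R$, I use (c) and (a): both $|m_1+\xi_1\mp\eta_2|\ge2\delta$ and at least one of them is $\ge\eta_2$, so, discarding the nonnegative imaginary-part contributions, $|s(\boldm,\bk)|=|z+iw|\,|z-iw|\ge2\delta\cdot\eta_2\ge2\delta\,\ell$. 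In all cases $|s(\boldm,\bk)|\ge2\delta\,\ell$ for every $\boldm\in2\pi\Z^2$; in fact the argument imposes no lower bound on $\ell$ beyond $\ell\in2\pi\N$, so $M$ may be chosen arbitrarily (say $M=1$), and $C=2\delta$ works for the symbol estimate.

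For the consequence, recall that $-\Delta_{\bk}$ acts on $L^2_{\mathrm{per}}(\Omega)$ as multiplication by $s(\boldm,\bk)$ in the Fourier basis $\{e^{i\boldm\cdot\bx}\}_{\boldm\in2\pi\Z^2}$; since none of these numbers vanishes, $(-\Delta_{\bk})^{-1}$ is the corresponding diagonal operator with entries $s(\boldm,\bk)^{-1}$, whence $\|(-\Delta_{\bk})^{-1}\|=\sup_{\boldm}|s(\boldm,\bk)|^{-1}\le(2\delta)^{-1}\ell^{-1}$, which is \eqref{eqlaplacenorm} with $C=(2\delta)^{-1}$. I expect the only point requiring genuine care to be the horizontal lines: there $\xi_1$ ranges over all of $[-\pi,\pi]$ and can make $m_1+\xi_1=\eta_2$ exactly, so $(m_1+k_1)^2-\eta_2^2$ in \eqref{hammer1} may vanish and that estimate alone is insufficient. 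The factorisation above circumvents this automatically, since whenever one factor degenerates the complementary one carries $m_1+\xi_1+\eta_2\approx2\eta_2$ and is correspondingly large; and the placement of the vertical lines of $\Lines$ at $\pm\frac{\pi}{2}\pm2\delta$ — at distance $2\delta$ from the resonant set $\frac{\pi}{2}+2\pi\Z$ — is exactly what secures the uniform lower bound $2\delta$ there.
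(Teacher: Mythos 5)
Your proof is correct, and it takes a genuinely different route from the paper's. The paper works with the two real lower bounds \eqref{hammer1} and \eqref{hammer3} for $|s|^2$ and runs a four-way case analysis: vertical versus horizontal lines of $\Lines$, crossed with the resonant modes $\boldm=(\pm\ell,m_2)$ versus the non-resonant ones. For $m_1\neq\pm\ell$ it needs a gap argument comparing $(\frac{\pi}{2}+\ell)^2$ with the intervals $I_{m_1}=(m_1+[-\pi,\pi])^2$, and for the resonant modes on horizontal lines it must switch to \eqref{hammer3} and argue that $m_2+\frac{\pm\ell+\mu}{\frac{\pi}{2}+\ell}\nu+\xi_2$ stays away from $0$ for $\ell$ large, which is why the paper's constants only hold for $\ell$ sufficiently large. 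You instead factor $s=(z+iw)(z-iw)$ and bound each linear factor separately: the pigeonhole on $\pm\eta_2$ forces one factor to be $\ge\eta_2\ge\ell$, while the arithmetic placement of $\Lines$ relative to $2\pi\Z$ (the offsets $\pm\frac{\pi}{2}\pm2\delta$ on the vertical lines, the offset $\xi_2\in[\pi-\delta,\pi+\delta]$ against $\eta_1\in2\pi\Z$ on the horizontal ones) keeps the other factor $\ge2\delta$ respectively $\ge\pi-\delta$. This removes the case split on $m_1$ entirely, yields the explicit constant $C=2\delta$, and — unlike the paper's proof — requires no largeness assumption on $\ell$. (Your factorisation in fact implies both \eqref{hammer1} and \eqref{hammer3}, via $(a^2+b^2)(c^2+d^2)\ge(ac)^2+(bd)^2$ and the Lagrange identity, so it is strictly the sharper tool.) The deduction of \eqref{eqlaplacenorm} from the diagonalisation in the orthogonal Fourier basis is the same in both arguments. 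The only cosmetic caveat is that the lemma as stated lets you choose $M$ freely, so the fact that your bound holds for all $\ell\in2\pi\N$ is a bonus rather than a requirement.
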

\begin{proof}[Proof of lemma \ref{lemsymbolest}] In total we have to consider four cases:
\begin{enumerate}
\item [1.] Vertical lines: $k_1=(\pm\frac{\pi}{2}\pm 2\delta)+i\nu$ $(\nu\in\R)$
\begin{enumerate}
\item[Case 1.1] $\boldm=(\pm\ell,m_2)$
\item[Case 1.2] $\boldm=(m_1,m_2)$ with $m_1\neq\pm\ell$
\end{enumerate}
\item [2.] Horizontal lines: $k_1=\mu+i\nu$ $(\mu\in[-\pi,\pi],~\nu\in 2\pi\Z,~|\nu|\le\tau_1)$
\begin{enumerate}
\item[Case 2.1] $\boldm=(\pm\ell,m_2)$
\item[Case 2.2] $\boldm=(m_1,m_2)$ with $m_1\neq\pm\ell$
\end{enumerate}
\end{enumerate}
For the case 1.1, we use the estimate \eqref{hammer1} to obtain \bea
\left|s(\boldm,(k_1,\xi_2+i\left(\frac{\pi}{2}+\ell\right)))\right|^2&\ge&
[(\pm\ell+(\pm\frac{\pi}{2}\pm 2\delta))^2-\left(\frac{\pi}{2}+\ell\right)^2]^2\\
&=&[\pm 2\ell(\pm\frac{\pi}{2}\mp\frac{\pi}{2}\pm2\delta)+(\pm\frac{\pi}{2}\pm 2\delta)^2-\frac{\pi^2}{4}]^2\\
&\ge&C(\delta)\ell^2
\eea
for sufficiently large $\ell\in 2\pi\N$, since $(\pm\frac{\pi}{2}\mp\frac{\pi}{2}\pm2\delta)\neq 0$ by the choice
$0<2\delta<\frac{\pi}{2}$.\\
To treat the cases 1.2 and 2.2 we consider the intervals
$$I_{m}:=(m+[-\pi,\pi])^2=\{(m+\eta)^2:~\eta\in[-\pi,\pi]\},$$
where $m\in 2\pi\Z$. Then $I_{m}=I_{-m}$ and
$\max I_{|m|}=\min I_{|m|+2\pi}$. So the intervals $I_{|m|}$ and $I_{|m|+2\pi}$ are adjacent
and the union of all $I_{m}$ is $[0,\infty)$. Since $\left(\frac{\pi}{2}+\ell\right)^2\in I_\ell$
and $m_1\neq \pm \ell$ we have for sufficiently large $\ell$
\ben
\nonumber\text{dist}(\left(\frac{\pi}{2}+\ell\right)^2,I_{m_1})
&\ge&\min\left\{\left(\frac{\pi}{2}+\ell\right)^2-(\ell-\pi)^2,(\ell+\pi)^2-\left(\frac{\pi}{2}+\ell\right)^2
\right\}\\
\label{eqdist}&\ge&(\pi\ell+\frac{3}{4}\pi^2)\ge \pi \ell. \een
 Using \eqref{hammer1} we obtain \bea
\left|s(\boldm,(\mu+i\nu,\xi_2+i\left(\frac{\pi}{2}+\ell\right)))\right|^2
&\ge&[(m_1+\mu)^2-\left(\frac{\pi}{2}+\ell\right)^2]^2\ge C^2\ell^2
\eea by \eqref{eqdist} since $(m_1+\mu)^2\in I_{m_1}$. This proves
the desired estimate in the case 2.2. In the case 1.2, the proof is
the same since again by estimate \eqref{hammer1} \bea
\left|s(\boldm,((\pm\frac{\pi}{2}\pm
2\delta)+i\nu,\xi_2+i\left(\frac{\pi}{2}+\ell\right)))\right|^2
&\ge&[(m_1+(\pm\frac{\pi}{2}\pm
2\delta))^2-\left(\frac{\pi}{2}+\ell\right)^2]^2 \eea
and $(m_1+(\pm\frac{\pi}{2}\pm 2\delta))^2\in I_{m_1}$.\\
For the case 2.1 we use the estimate \eqref{hammer3}. \bea
\left|s(\boldm,(\mu+i\nu,\xi_2+i\left(\frac{\pi}{2}+\ell\right)))\right|^2
&\ge&2[(\pm\ell+\mu)\nu+(m_2+\xi_2)\left(\frac{\pi}{2}+\ell\right)]^2\\
&=&2\left(\frac{\pi}{2}+\ell\right)^2\left[m_2+\frac{\pm\ell+\mu}{\left(\frac{\pi}{2}+\ell\right)}\nu+\xi_2\right]^2.
\eea
$m_2+\frac{\pm\ell+\mu}{\left(\frac{\pi}{2}+\ell\right)}\nu$ converges to $m_2\pm \nu\in 2\pi\Z$ as
$\ell\to\infty$ (uniformly with respect to
$\mu\in[-\pi,\pi]$ and $\nu\in 2\pi\Z,~|\nu|\le\tau_1$). Thus $m_2+\frac{\pm\ell+\mu}{\left(\frac{\pi}{2}+\ell\right)}\nu$ is contained in a
sufficiently small neighborhood of the grid $2\pi\Z$ for sufficiently large $\ell$.
Since $\xi_2\in[\pi-\delta,\pi+\delta]$ with $0<\delta<\frac{\pi}{4}$, there exists a constant
$C(\delta,\tau_1)>0$ (independent of $m_2$) such that for sufficiently large $\ell$
$$\left[m_2+\frac{\pm\ell+\mu}{\left(\frac{\pi}{2}+\ell\right)}\nu+\xi_2\right]^2\ge C(\delta,\tau_1)>0.$$ Then, for
sufficiently large $\ell$ \bea
\left|s(\boldm,(\mu+i\nu,\xi_2+i\left(\frac{\pi}{2}+\ell\right)))\right|^2\ge
C(\delta,\tau_1)\ell^2 \eea holds, with another constant
$C(\delta,\tau_1)>0$. \end{proof}

Using the same Neumann series argument as in the proof of theorem \ref{freeofpoles} we obtain the following

\begin{corollary}\label{corTbound}
There exists a $C=C(\delta,\tau_1,\lambda)>0$ and a $M=M(\delta,\tau_1,\lambda)>0$ such that for all
$\ell\in 2\pi\N$, $\ell>M$,
all $k_1\in\Lines$, and all $\xi_2\in[\pi-\delta,\pi+\delta]$ the following estimate holds:
\beq\nn
\no{T(k_1,\xi_2+i\left(\frac{\pi}{2}+\ell\right))}\le C\ell^{-1}.
\eeq
\end{corollary}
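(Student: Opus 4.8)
The plan is to deduce the bound for $T$ directly from the operator-norm estimate \eqref{eqlaplacenorm} for the free shifted Laplacian, by the same standard Neumann-series perturbation argument already used in the proof of Theorem~\ref{freeofpoles}. First I would fix $k_1\in\Lines$, $\xi_2\in[\pi-\delta,\pi+\delta]$ and $\ell\in 2\pi\N$, and abbreviate $\bk=(k_1,\xi_2+i(\tfrac{\pi}{2}+\ell))$. By Lemma~\ref{lemsymbolest} there is $M_0=M_0(\delta,\tau_1)$ such that for $\ell>M_0$ the symbol $s(\boldm,\bk)$ never vanishes, so $-\Delta_{\bk}$ is boundedly invertible with $\|(-\Delta_{\bk})^{-1}\|\le C_0\ell^{-1}$, the constant $C_0$ being uniform in $k_1\in\Lines$ and $\xi_2\in[\pi-\delta,\pi+\delta]$.

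Next, regarding $\eps_0$ as the multiplication operator on $L^2(\Omega)$ of norm $\|\eps_0\|_\infty$, I would use the factorization
\[
-\Delta_{\bk}-\lambda\eps_0=(-\Delta_{\bk})\bigl(I-\lambda(-\Delta_{\bk})^{-1}\eps_0\bigr).
\]
From the estimate above, $\|\lambda(-\Delta_{\bk})^{-1}\eps_0\|\le \lambda C_0\|\eps_0\|_\infty\,\ell^{-1}$, so one may choose $M=M(\delta,\tau_1,\lambda)\ge M_0$ so large that this norm is $\le\tfrac12$ for all $\ell>M$; here it is essential that $C_0$ does not depend on the particular $k_1\in\Lines$ or $\xi_2$. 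The Neumann series for $\bigl(I-\lambda(-\Delta_{\bk})^{-1}\eps_0\bigr)^{-1}$ then converges with norm $\le 2$, hence $-\Delta_{\bk}-\lambda\eps_0$ is invertible and
\[
(-\Delta_{\bk}-\lambda\eps_0)^{-1}=\bigl(I-\lambda(-\Delta_{\bk})^{-1}\eps_0\bigr)^{-1}(-\Delta_{\bk})^{-1},\qquad \|(-\Delta_{\bk}-\lambda\eps_0)^{-1}\|\le 2C_0\ell^{-1}.
\]
Since $T(k_1,k_2)=\tfrac{1}{2\pi}(-\Delta_{\bk}-\lambda\eps_0)^{-1}$, this gives $\|T(k_1,\xi_2+i(\tfrac{\pi}{2}+\ell))\|\le \tfrac{C_0}{\pi}\ell^{-1}$, which is the asserted estimate with $C:=C_0/\pi$ and the stated $M$.

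I do not expect any genuine obstacle: all the analytic work has been front-loaded into Lemma~\ref{lemsymbolest}, and what remains is the routine perturbative step. The only point deserving care is bookkeeping of constants — one must check that the threshold $M$ and the constant $C$ depend only on $\delta,\tau_1,\lambda$ and are uniform over $k_1\in\Lines$ and $\xi_2\in[\pi-\delta,\pi+\delta]$, but this is immediate from the uniformity already present in \eqref{eqlaplacenorm}.
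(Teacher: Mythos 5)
Your proof is correct and is exactly the argument the paper intends: the paper's "proof" of this corollary is the one-line remark that it follows from Lemma \ref{lemsymbolest} by the same Neumann-series perturbation argument as in Theorem \ref{freeofpoles}, which is precisely the factorization and smallness estimate you carry out. The constant bookkeeping (uniformity of $C_0$ and $M$ over $k_1\in\Lines$ and $\xi_2\in[\pi-\delta,\pi+\delta]$, dependence only on $\delta,\tau_1,\lambda$) is handled correctly.
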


\section{Proof of proposition \ref{propmero}}
\begin{proof}
Fix $k_2$. First we work locally in $k_1$, i.e. fix a complex ball $B(k_1^{*}, R)$
around $k_1^{*}\in \C$. For $\mu > 0$ large enough, the quadratic form
$$(u, v) \mapsto \la (-\Delta_{(k_1, k_2)}+\mu) u, v\ra$$ is uniformly coercive
for all $k_1\in B(k_1^{*}, R)$ and hence $(-\Delta_{(k_1, k_2)}+\mu)^{-1}$
exists as a bounded operator between $L^2(\Omega)$ and $H^2_{\text{per}}(\Omega)$
(this is easily seen using Fourier series on the cube $\Omega$).
$(-\Delta_{(k_1, k_2)}-\lambda \eps_0)^{-1}$ exists as a bounded operator if and only if
the operator
\beq\label{Fredholm}
(I + (-\Delta_{(k_1, k_2)}+\mu)^{-1} (\eps_0 - \mu)  )^{-1}
\eeq
exists as a bounded operator from $L^2(\Omega)$ onto $L^2(\Omega)$. Note that
$(-\Delta_{(k_1, k_2)}+\mu)^{-1} (\eps_0 - \mu):L^2(\Omega)\to L^2(\Omega)$ is compact.
So by the analytic Fredholm theorem (see \cite{Reed}), \eqref{Fredholm} exists either
nowhere on $B(k_1^{*}, R)$ or on $B(k_1^*, R) \setminus \Sigma$, where $\Sigma$
is a non-accumulating discrete set. As a consequence, the same holds true for
$(-\Delta_{(k_1, k_2)}-\lambda \eps_0)^{-1}$ on $B(k_1^{*}, R)$. A covering argument
using overlapping balls shows that the analytic Fredholm alternative holds for $(-\Delta_{(\cdot, k_2)}-\lambda \eps_0)^{-1}$
on the whole of $\C$. This proves the claim, since $T(k_1, k_2)$ is assumed to exist for
at least one $k_1\in \C$.
\end{proof}

\end{document}